\PassOptionsToPackage{dvipsnames}{xcolor}
\documentclass[11pt]{amsart}
\usepackage{amsmath,amssymb,amsfonts}

\author[D.\ Chan]{David Chan}
\address[Chan]{
Department of Mathematics,
         Michigan State University,
         East Lansing, MI
         U.S.A.}
\email{\href{mailto:chandav2@msu.edu}{chandav2@msu.edu}}

\author[B.\ Spitz]{Ben Spitz}
\address[Spitz]{Department of Mathematics, Indiana University, Bloomington, IN, U.S.A.}
\email{\href{mailto:bespitz@iu.edu}{bespitz@iu.edu}}

\keywords{Burnside Tambara functor, prime ideal, Nakaoka spectrum}

\subjclass[2020]{55P91, 
19A22, 
13A15 
}

\usepackage[margin=1in]{geometry}

\usepackage{quiver} 
\usepackage{mathrsfs} 
\usepackage{mathtools} 
\usepackage{xcolor} 
\usepackage[shortlabels]{enumitem} 
\usepackage{microtype} 

\usepackage[unicode=true, pdfusetitle,
    colorlinks=true,
    linkcolor=orange,
    citecolor=orange,
    urlcolor=orange
]{hyperref} 

\usepackage[nameinlink]{cleveref}

\DeclareMathOperator{\res}{res}
\DeclareMathOperator{\tr}{tr}
\DeclareMathOperator{\nm}{nm}

\DeclareMathOperator{\conj}{c}
\DeclareMathOperator{\Spec}{Spec}
\newcommand{\uA}{\underline{A}} 

\DeclareMathOperator{\FP}{FP}
\newcommand{\mf}[1]{\mathfrak{#1}}

\newcommand{\Q}{\mathtt{Q}} 
\DeclareMathOperator{\nil}{Nil}
\DeclareMathOperator{\kil}{Kil}
\DeclareMathOperator{\id}{id}

\numberwithin{equation}{section} 
\numberwithin{figure}{section}
\crefname{lemma}{Lemma}{Lemmas}
\crefname{theorem}{Theorem}{Theorems}
\crefname{lettertheorem}{Theorem}{Theorems}
\crefname{definition}{Definition}{Definitions}
\crefname{proposition}{Proposition}{Propositions}
\crefname{remark}{Remark}{Remarks}
\crefname{corollary}{Corollary}{Corollaries}
\crefname{equation}{Equation}{Equations}
\crefname{construction}{Construction}{Constructions}
\crefname{ex}{Example}{Examples}
\crefname{appsec}{Appendix}{Appendices}
\crefname{section}{Section}{Sections}
\crefname{subsection}{Subsection}{Subsections}

\theoremstyle{plain}
\newtheorem{theorem}[equation]{Theorem}
\newtheorem{lettertheorem}{Theorem}

\newtheorem{corollary}[equation]{Corollary}
\newtheorem{proposition}[equation]{Proposition}
\newtheorem{lemma}[equation]{Lemma}
\newtheorem*{theorem*}{Theorem}

\theoremstyle{definition}
\newtheorem{definition}[equation]{Definition}
\newtheorem{example}[equation]{Example}
\newtheorem{remark}[equation]{Remark}

\newtheorem{notation}[equation]{Notation}

\usepackage[
    maxbibnames=99,
    isbn=false,
    url=true,
    doi=false,
    backref=true
]{biblatex}
\DefineBibliographyStrings{english}{%
  backrefpage = {Cited on page},
  backrefpages = {Cited on pages},
}
\DeclareFieldFormat{url}{\url{#1}} 

\usepackage[
    textwidth=3cm,
    colorinlistoftodos]
{todonotes} 

\title{Radicals and Nilpotents in Equivariant Algebra}

\begin{document}

\begin{abstract}
    Associated to each Tambara functor $T$ is its \emph{Nakaoka spectrum} $\Spec(T)$, analogous to the Zariski spectrum of a commutative ring. We establish that this topological space is spectral. This result follows from an analysis of the notion of nilpotence in Tamabra functors. We prove that the \emph{nilradical} of a Tambara functor $T$ (the intersection of all of its prime ideals) is computed levelwise, i.e. consists precisely of the nilpotent elements in $T$. In contrast to ordinary commutative algebra, the nilpotents of $T$ are not the same as the elements $x$ such that $T[1/x] = 0$; we therefore also give a classification of these elements. As a corollary, we observe that the set of these elements in $\pi_\star^s$ (the equivariant stable stems, viewed as an $\mathrm{RO}(G)$-graded Tambara functor) forms an ideal.
\end{abstract}

\maketitle


\section{Introduction}

A generalization of commutative rings, Tambara functors model multiplicative structures which arise in representation theory and equivariant homotopy theory \cite{tambara:1993,HHR16,Brun}. There is a robust analogy between the algebraic theory of Tambara functors and classical commutative algebra \cite{nakaoka:2012,hill:2017,LRZ,AngeltveitBohmann,HillMehrleQuigley}.  In this paper, we build on one aspect of this connection by studying the notion of nilpotence in Tambara functors. Just as in the classical setting, we show that nilpotents are precisely those elements which are in every prime ideal. On the other hand, we also observe that they are not the set of elements which kill the Tambara functor upon being inverted.

To explain these results more precisely, we need a few definitions.  Given a finite group $G$, a $G$-Tambara functor $T$ consists of the data of commutative rings $T(G/H)$ for each subgroup $H\leq G$ (the \emph{levels} of the Tambara functor $T$), together with various structure morphisms connecting these rings.  A standard example is obtained from any $G$-Galois extension $L/K$, where $T(G/H) := L^H$, the fixed point ring, and the structure morphisms encode the Galois-theoretic norm and trace maps.  

An \textit{ideal} of a Tambara functor is a collection of ideals $I(G/H) \subseteq T(G/H)$ (one for each subgroup $H \leq G$) which are closed under the structure maps.  In analogy with commutative algebra, an ideal $\mf{p}$ is prime if $IJ\subseteq\mf{p}$ implies either $I\subseteq\mf{p}$ or $J\subseteq\mf{p}$.  It is important to note that this product of ideals $IJ$ is not necessarily computed levelwise, and thus the condition of being a prime Tambara ideal does not imply that each ideal $\mf{p}(G/H) \subseteq T(G/H)$ is a prime ideal.  Moreover, it is possible for there to be prime ideals $\mf{a} \subseteq T(G/H)$ which do not appear as $\mf{p}(G/H)$ for any prime ideal $\mf{p}\subseteq T$.  Thus, despite the fact that the definition of prime Tambara ideals mimics the definition of prime ideals in commutative rings exactly, the theory of prime Tambara ideals is somewhat idiosyncratic. Computations of $\Spec(T)$ for some Tambara functors have been carried out in \cite{nakaoka:2014,CalleGinnett:23,CMQSV:24,CCMQSV}.

In a commutative ring $R$, an element is nilpotent if it satisfies the following three equivalent conditions:
\begin{enumerate}
    \item there is a natural number $n$ so that $x^n=0$;
    \item $x$ is contained in every prime ideal of $R$;
    \item the localization $R[1/x]$ is the zero ring.
\end{enumerate}
In equivariant algebra, one finds in many examples (see \Cref{example: bad kilpotents} below, for instance) that $(3)$ is a strictly weaker condition.  Specifically, there are non-nilpotent elements $x$ in Tambara functors such that the only map which inverts them is the terminal map to the zero Tambara functor.  Our first main result is that, surprisingly, (1) and (2) remain equivalent for Tambara functors.

\begin{lettertheorem}[{\cref{thm: main}}]\label{theorem: main theorem 1 intro}
    The intersection of all prime Tambara ideals in a Tambara functor is the collection of nilpotent elements.
\end{lettertheorem}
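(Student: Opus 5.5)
We need to show two inclusions. Write $N(G/H) \subseteq T(G/H)$ for the set of nilpotent elements at each level.

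\textbf{Step 1: the levelwise nilpotents form a Tambara ideal.} Each $N(G/H)$ is an ideal of the ring $T(G/H)$, so we check closure under the structure maps.
\begin{itemize}
\item Restriction and conjugation are ring maps, so they preserve nilpotence.
\item Transfer satisfies Frobenius reciprocity, $\tr(x)^n = \tr(x\cdot \res\tr(x)^{n-1})$. Iterating this, or applying the double coset formula to $\res\tr(x)$, writes a large power of $\tr(x)$ as a sum of transfers of products that each contain many conjugates of $x$; these vanish once $n$ is large.
\item The norm is multiplicative, so $\nm(x)^n = \nm(x^n) = \nm(0) = 0$.
\end{itemize}
The norm is also not additive, but that does not matter: $N$ is an ideal in each ring, and we only need $\nm(N)\subseteq N$, which the computation above gives.

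\textbf{Step 2: nilpotents lie in every prime.} Let $\mf p$ be prime and $x\in T(G/H)$ with $x^n=0$. Consider the Tambara ideal $\langle x\rangle$ generated by $x$. Its elements are transfers of products of restrictions and conjugates of norms of multiples of $x$. I expect a power $\langle x\rangle^m$ (the product of Tambara ideals) to be $0$ for $m$ large, by the same Frobenius and multiplicativity estimates as in Step 1. Then primality gives $\langle x\rangle\subseteq\mf p$ by induction on $m$. If bounding $\langle x\rangle^m$ directly is messy, I will instead use a characterization of primeness by elements, if the paper has one, in which $\langle a\rangle\langle b\rangle\subseteq \mf p$ implies $a\in\mf p$ or $b\in\mf p$, and apply it to $a=x$, $b=x^{n-1}$ to induct on $n$.

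\textbf{Step 3 (main obstacle): a non-nilpotent $x$ avoids some prime.} The classical argument localizes at $x$, but by the introduction $T[1/x]$ can be zero even when $x$ is not nilpotent, so that route fails. Instead I take the set
\[ S = \{\text{all } \nm_H^K(x^k)\text{ and their restrictions and conjugates}\}, \]
a multiplicatively closed family across levels, and ask that an ideal avoid $S$ only at the appropriate levels. The ideal $0$ avoids $S$ provided these norms are nonzero; this holds because restricting $\nm_H^K(x^k)$ back to $H$ gives a product of conjugates of $x^k$, which should be non-nilpotent-compatible.

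That last point is delicate. Products of conjugates of $x$ can vanish even though $x$ is not nilpotent, which is exactly the kilpotent phenomenon. So the family I actually use should be just the powers $\{x^k\}$ at level $G/H$. Take a Tambara ideal $\mf p$ maximal, by Zorn's lemma, among those with $\mf p(G/H)\cap\{x^k\}=\emptyset$; this is possible since $0$ is such an ideal. Primality of $\mf p$ then requires the following: if $I,J\supsetneq\mf p$, then $I(G/H)$ and $J(G/H)$ contain powers $x^a$ and $x^b$, and we need $(IJ)(G/H)$ to contain a power of $x$. This holds provided $(IJ)(G/H)\supseteq I(G/H)J(G/H)$, which I will verify from the definition of the product ideal, since it contains the levelwise products. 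Hence $x^{a+b}\in IJ$, contradicting maximality. So $\mf p$ is prime and $x\notin\mf p$.
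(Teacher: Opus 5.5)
Your proposal is correct, and for the hard inclusion it is simpler than the paper's argument. Both proofs take $\mf{p}$ maximal among Tambara ideals with $\mf{p}(G/H)\cap\{x^k\}=\varnothing$. The paper (\cref{prop: prime ideal machine}) then checks primality through Nakaoka's elementwise criterion. Assuming $\Q(y,z,\mf{p})$ with $y,z\notin\mf{p}$, it needs $y'z'\in\mf{p}$ for arbitrary $y'\in\langle y\rangle(G/H)$ and $z'\in\langle z\rangle(G/H)$. Supplying that is the whole job of \cref{sec: constructing prime ideals}: it propagates $\Q$ through sums via Tambara reciprocity, through transfers via bispan decompositions, and hence to all of $\langle b\rangle$. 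In effect the paper shows that $\Q(y,z,\mf{p})$ forces $\langle y\rangle\langle z\rangle\subseteq\mf{p}$, and then relies on Nakaoka's theorem to conclude.

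You instead use the ideal-theoretic definition of primality directly. That needs only $I(G/H)J(G/H)\subseteq(IJ)(G/H)$ and the fact that sums of Tambara ideals are levelwise, so the classical argument goes through verbatim. The paper's route produces the $\Q$-propagation lemmas as a by-product, but they are not needed for this theorem. Two small points should be written out:
\begin{enumerate}
\item Given $IJ\subseteq\mf{p}$, pass to $I+\mf{p}$ and $J+\mf{p}$. Their product still lies in $\mf{p}$, because $\mf{p}$ contains every levelwise product $(I+\mf{p})(G/K)\cdot(J+\mf{p})(G/K)$.
\item The resulting $x^{a+b}\in\mf{p}(G/H)$ contradicts $\mf{p}$ avoiding the powers of $x$, not maximality.
\end{enumerate}

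For the easy inclusion, the paper simply cites \cite[Theorem 4.7]{CMQSV:24}, that primes are levelwise radical. Your primary route, showing $\langle x\rangle^m=0$ and peeling off factors by primality, is exactly the argument of \cref{thm: nakaoka radical} and is sound.

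Your fallback would not work. Applying the elementwise criterion to $x$ and $x^{n-1}$ requires products such as $x\cdot\conj_g(x)^{n-1}$ with $g\in N_G(H)$ to lie in $\mf{p}$. These elements are square-zero, but nothing yet places them in $\mf{p}$. So the induction reduces to the case $n=2$ and is circular there.

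Step 1 is unnecessary, since an intersection of prime ideals is automatically an ideal.
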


To highlight why this result is surprising, we recall the standard proof of this fact for commutative rings: for any element $x$ in a commutative ring $R$, we have biconditionals
\begin{align*}
    x \text{ is nilpotent} &\iff R[1/x] = 0 \\
    &\iff \Spec(R[1/x]) = \varnothing \\
    &\iff \{\mathfrak{p} \in \Spec(R) : x \notin \mathfrak{p}\} = \operatorname{img}(\Spec(R[1/x])\to \Spec(R)) = \varnothing.
\end{align*}
For Tambara functors, this argument has no chance of going through as written, as it relies on the equivalence of conditions $(1)$ and $(3)$ above which, as mentioned, fails in this setting.  Nevertheless, the intersection of all prime ideals is the collection of nilpotent elements.

We remark that a construction of radicals of ideals for Tambara functors also appears in work of Nakaoka \cite{nakaoka:2012}.  A consequence of our work is that Nakaoka's construction of radical ideals is equivalent to the much simpler construction of taking levelwise radicals; see \cref{cor: nakaoka radicals}.

The collection of prime ideals in a Tambara functor $T$ assembles into a topological space called the \emph{Nakaoka spectrum of $T$}, which we denote by $\Spec(T)$.  In prior work with collaborators, the authors have shown that $\Spec(T)$ is a sober, quasi-compact topological space \cite[Theorem A]{CMQSV:24}.  As an application of \cref{theorem: main theorem 1 intro}, we improve upon this result.  Recall that a topological space $X$ is \emph{spectral} if there exists a commutative ring $R$ such that $X$ is homeomorphic to $\Spec(R)$.
\begin{lettertheorem}[{\cref{cor: spectral}}]\label{theorem: main theorem 2 intro}
    The Nakaoka spectrum of any Tambara functor is a spectral space.
\end{lettertheorem}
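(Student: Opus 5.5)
We will use Hochster's characterization: a topological space is spectral if and only if it is quasi-compact, sober, the quasi-compact open subsets are closed under finite intersections, and they form a basis of the topology. By \cite[Theorem A]{CMQSV:24}, $\Spec(T)$ is already sober and quasi-compact. So it remains to produce a basis of quasi-compact opens that is closed under finite intersections.

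The natural candidates are the basic opens. For an element $x \in T(G/H)$, let
\[
D(x)=\{\mf p\in\Spec(T): x\notin \mf p(G/H)\}.
\]
More generally, for a finitely generated ideal $I$, let $D(I)=\{\mf p : I\not\subseteq \mf p\}$. The sets $D(I)$ are complements of the closed sets $V(I)$. Since every ideal is the union (sum) of its finitely generated subideals, the sets $D(I)$ with $I$ finitely generated form a basis. They are closed under finite unions, since $D(I)\cup D(J)=D(I+J)$. They are also closed under finite intersections: $D(I)\cap D(J)=D(IJ)$ by the primality definition, and $IJ$ is finitely generated when $I$ and $J$ are. The product-of-ideals statement should be checkable directly from the construction of the Tambara product ideal. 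In fact, for finitely generated $I$ it suffices to note that $D(I)$ is a finite union of sets $D(x)$, and that $D(x)\cap D(y)=D(\langle x\rangle\langle y\rangle)$.

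The main step is showing that each $D(I)$ with $I$ finitely generated is quasi-compact, and here \cref{thm: main} enters. I would follow the classical argument. Suppose $D(I)\subseteq\bigcup_\alpha D(J_\alpha)$. Then every prime containing $J=\sum_\alpha J_\alpha$ also contains $I$. Passing to the quotient $T/J$, whose primes are exactly the primes of $T$ containing $J$, each generator of $I$ maps into the intersection of all primes of $T/J$. By \cref{thm: main}, that intersection is the nilradical. Hence every generator $x$ of $I$ satisfies $x^n\in J(G/H)$ for some $n$. Each such $x^n$ is a finite sum of elements of finitely many $J_\alpha$'s. Since $I$ has finitely many generators, finitely many $\alpha$ suffice: the generators of $I$ are nilpotent modulo $J_{\alpha_1}+\dots+J_{\alpha_k}$. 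Therefore every prime containing these finitely many $J_{\alpha_i}$ contains $I$; this uses that primes are levelwise radical, which follows from \cref{thm: main} applied in a quotient, or from the fact that $x^n\in\mf p$ forces $x\in\mf p$. This gives the finite subcover.

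The expected obstacle is the bookkeeping that makes quotients behave: primes of $T/J$ must correspond to primes of $T$ containing $J$, and the statement "$x^n\in\mf p(G/H)\Rightarrow x\in\mf p(G/H)$" must hold. Both are exactly what \cref{thm: main} provides once the quotient correspondence is in place, and that correspondence is standard from the definitions. Verifying $D(x)\cap D(y)=D(\langle x\rangle\langle y\rangle)$ is formal from the definition of a prime ideal. With these, Hochster's criterion gives spectrality.
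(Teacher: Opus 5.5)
Your quasi-compactness argument is the paper's own. You pass to the quotient by the sum of the covering ideals, use \cref{thm: main} to put $x^n$ into that sum, note that $x^n$ involves only finitely many summands, and come back using that primes are levelwise radical. Doing this for finitely generated $I$ rather than a single element $f$ changes nothing. Your identity $D(I)\cap D(J)=D(IJ)$ also holds, by primality together with $IJ\subseteq I\cap J$.

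The gap is the claim that $IJ$ is finitely generated when $I$ and $J$ are, which you leave as ``should be checkable''. Your fallback reduction to $D(x)\cap D(y)=D(\langle x\rangle\langle y\rangle)$ does not remove it, because you still need $D(\langle x\rangle\langle y\rangle)$ to be a finite union of basic opens. This is the one genuinely Tambara-specific input in the intersection step. Unlike for rings, $\langle x\rangle\langle y\rangle$ is not $\langle xy\rangle$; indeed $x$ and $y$ may live at different levels. Without some finiteness there is no reason for $D(x)\cap D(y)$ to be quasi-compact.

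The missing idea is that $\langle x\rangle\langle y\rangle=\langle z_1,\dots,z_m\rangle$, where $z_1,\dots,z_m$ are the generalized products of $x$ and $y$. There are only finitely many of these by \cref{remark: finitely many translates}. The inclusion $\supseteq$ is clear. For $\subseteq$:
\begin{enumerate}
\item write elements of $\langle x\rangle(G/L)$ and $\langle y\rangle(G/L)$ via \cref{lemma: ideal generated by a};
\item expand $\tr^L_A(ax')\cdot\tr^L_B(by')$ using Frobenius reciprocity and the double coset formula;
\item use \cref{lem:MT of MT} to see that each resulting term is a transfer of a multiple of some $z_i$.
\end{enumerate}
Your reduction to principal ideals then handles general finitely generated $I,J$.

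The paper avoids computing generators altogether. \cref{lemma: intersection of Ds} uses Nakaoka's criterion, phrased via $\Q(x,y,\mf p)$, to show $D(x)\cap D(y)=\bigcup_i D(z_i)$ directly, and \cref{remark: finite union} makes this union finite. With either fix your proof is complete.
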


Finally, we turn to the question of characterizing elements in a Tambara functor which satisfy property (3); that is, we study those elements $x$ such that $T[1/x]=0$.  To differentiate these from nilpotents, we call such elements \emph{kilpotents}, as they kill the Tambara functor upon inversion.  While these elements do not admit a description as those in the intersection of some class of ideals (indeed, the collection of kilpotents is not always an ideal), these elements are important to keep track of, as localization is such a central tool in applications, especially stable homotopy theory; see, for instance,  \cite{Hill:Localization,HillHopkins:closure}.

\begin{lettertheorem}[{\cref{thm: kilradical}}]\label{theorem: main theorem 3 intro}
    Let $T$ be a $G$-Tambara functor, and let $x \in T(G/H)$ be some element of $T$. Then $T[1/x] = 0$ if and only if
    \[\prod_{g \in G} g \cdot \res^H_e(x)\]
    is nilpotent.
\end{lettertheorem}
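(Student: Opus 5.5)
We prove both implications using the explicit form of $T[1/x]$. The key quantity is $N := \nm_e^G \res^H_e(x) \in T(G/G)$; by the Tambara double coset formula, $\res^G_e N = \prod_{g\in G} g\cdot \res^H_e(x)$. Write $y := \prod_{g} g\cdot\res^H_e(x) \in T(G/e)$. Inverting $x$ is the same as inverting $N$, and this is where most of the work lies. If $x$ becomes a unit in some $S$, then so does $\res^H_e x$. Norms of units are units, so $N$ becomes a unit. Conversely, if $N$ is a unit, then $\res^G_H N = \prod_{\text{double cosets}} \nm^{H}_{H\cap {}^g e}(\dots)$ contains the factor $\nm_e^H \res^H_e x$ (coming from the identity double coset).

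The converse step also needs $x$ itself to be a unit, not just its norm. For this I would use the observation that $\res^H_e x$ divides $y$, so $\res^H_e x$ is a unit at level $G/e$. Then I would use the fact that an element of $S(G/H)$ whose restriction to the bottom level is a unit is itself a unit. One standard way to see this: $\nm_e^H\res_e^H x$ is a unit, and it equals $x\cdot(\text{stuff})$ via the norm formula $\nm_e^H\res^H_e x = x^{?}\cdots$. This fails in general, so instead I would argue via $T[1/x]=0 \iff 1=0$ in $T[1/x](G/e)$ or in $T[1/x](G/H)$. I should therefore avoid needing full unit detection.

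Cleaner plan. The localization $T[1/x]$ exists and is computed from the ideal-theoretic description: $T[1/x]=0$ iff $1\in$ the kernel at top. Since a Tambara functor is zero iff its bottom level is zero (because $1=\tr$ of something is not available in general, but $\res$ is a ring map and $0=1$ propagates up via norms: $\nm_e^G(0)=0$ and $\nm(1)=1$), it suffices to show that $T[1/x](G/e) = 0 \iff y$ is nilpotent. I would identify the bottom level of $T[1/x]$ with $T(G/e)[1/y]$, using the standard fact that localizing a Tambara functor at a top-level element $N$ is levelwise localization at its restrictions, i.e. $T[1/N](G/K)=T(G/K)[1/\res^G_K N]$ (Blumberg–Hill / the paper's preliminaries). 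Then $T(G/e)[1/y]=0 \iff y$ is nilpotent by classical algebra.

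The main obstacle is the comparison $T[1/x] \cong T[1/N]$. One direction, that $x\mapsto$ unit forces $N\mapsto$ unit, is the easy norm argument above. For the other direction I would show that in $T[1/N]$ the element $x$ is invertible, or at least that $T[1/N]=0 \Rightarrow T[1/x]=0$, which is all we need. This holds because any map $T\to S$ inverting $x$ inverts $N$ and hence factors through $T[1/N]$; if $T[1/N]=0$ then $S=0$. So we only need that direction, together with the levelwise formula for $T[1/N]$. For the converse, when $y$ is not nilpotent, $T[1/N]\neq 0$, but we must still show $T[1/x]\neq0$. For this I would localize further at a prime $\mathfrak q$ of $T(G/e)[1/y]$. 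The resulting bottom-level ring then has $\res^H_e x$ a unit. I would then check that the fixed-point Tambara functor $K \mapsto (T(G/e)[1/y])^K$-type object (the coinduced/fixed-point functor of the $G$-ring $T(G/e)[1/y]$, which receives a map from $T$ via restriction to $G/e$) inverts $x$. In a fixed-point Tambara functor, units are detected at the bottom level, so $x$ maps to a unit. Its top level is nonzero because $y$ is not nilpotent, so $T[1/x]\neq 0$. This fixed-point construction is the cleanest route, and it also handles the first direction directly.
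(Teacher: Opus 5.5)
The gap is in the implication ``$y$ nilpotent $\Rightarrow T[1/x]=0$''. There you use, as a standard fact, that inverting a top-level element $a$ is computed levelwise, $T[1/a](G/K)=T(G/K)[1/\res^G_K a]$. This is false in general, and the paper's own example shows it. Take $G=C_p$ and $a=t-p\in\uA(C_p/C_p)$. The levelwise localization is $\mathbb{Z}[1/p]$ at $C_p/C_p$ and $0$ at $C_p/e$, which is not a Tambara functor by \cref{lemma: zero anywhere is zero everywhere}; in fact $\uA[1/(t-p)]=0$. The levelwise construction can only carry a norm $\nm^K_L$ if $\nm^K_L\res^G_L a$ becomes a unit after inverting $\res^G_K a$, and here $\nm^{C_p}_e\res^{C_p}_e(t-p)=0$.

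For your specific $N=\nm^G_e\res^H_e(x)$, the double coset formula gives $\nm^K_L\res^G_L N=(\res^G_K N)^{[K:L]}$. That is the kind of input you would need to rescue the identification, but you never check it. The detour is unnecessary anyway. Let $\varphi\colon T\to T[1/x]$ be the localization. Then $\varphi_e(y)=\prod_{g}\conj_g\res^H_e(\varphi_H(x))$ is a unit because $\varphi_H(x)$ is, and it is nilpotent because $y$ is. Hence $T[1/x](G/e)=0$, and $T[1/x]=0$ by \cref{lemma: zero anywhere is zero everywhere}. This is the paper's argument, and it needs neither $N$ nor $T[1/N]$.

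Two side remarks. Your opening claim that inverting $x$ is the same as inverting $N$ is false: $x=[C_3/e]-2\in\uA(C_3/C_3)$ has $\res^{C_3}_e(x)=1$, so $N=1$, yet $x$ is not a unit, since the ring map $A(C_3)\to\mathbb{Z}$, $[X]\mapsto\lvert X^{C_3}\rvert$, sends it to $-2$. Also, the fixed-point construction does not handle this direction: when $y$ is nilpotent its target is $0$, so it gives no information.

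Your other implication is exactly the paper's proof. Since $y$ is $G$-fixed, $G$ acts on $T(G/e)[1/y]$. The composite $T\to\FP(T(G/e))\to\FP(T(G/e)[1/y])$ sends $x$ to the $H$-fixed element $\res^H_e(x)/1$. This divides $y/1$, so it is a unit of $(T(G/e)[1/y])^H$ (its inverse is automatically $H$-fixed). The map therefore factors through $T[1/x]$, and the target is nonzero because $y$ is not nilpotent. Drop the step of localizing at a prime $\mathfrak q$: such a prime need not be $G$-stable, and localizing at it would destroy the $G$-action that $\FP$ requires.
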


As a consequence, one can show that the set of kilpotents in a Tambara functor \emph{does} form an ideal in certain examples, e.g. the set of kilpotents in the Burnside Tambara functor is an ideal; see \cref{cor:  trivial action means kilradical is an ideal,cor: kil of A}. The analogous claim is true for the $\mathrm{RO}(G)$-graded Tambara functor of equivariant stable stems; see \Cref{stable stems}.

\subsection{Outline:}

Background on Tambara functors and ideals is given in \cref{sec: background}.  We prove some technical results about prime ideals in \cref{sec: constructing prime ideals}.  The proof of \cref{theorem: main theorem 1 intro} is given in \cref{sec: nilpotents}.  The proof of \cref{theorem: main theorem 2 intro} is given in \cref{sec: spectral spaces}.  Finally, \cref{sec: kilradical} introduces the kilradical of a Tambara functor and proves \cref{theorem: main theorem 3 intro}.

\subsection{Conventions.}
Throughout the paper we will use $G$ to denote a finite group.  We will write $H\leq G$ to mean that $H$ is a subgroup of $G$, and $H<G$ to mean that $H$ is a proper subgroup. Similar notations apply to subsets ($\subseteq$) and proper subsets ($\subset$).

\subsection{Acknowledgements}

We would like to thank Maxine Calle and Teena Gerhardt for helpful feedback on a draft of this paper.  We would also like to thank Phil, for years of inspiration.  DC was partially supported by NSF grant DMS-2135960.

\section{Background}\label{sec: background}

This section contains background material used throughout the rest of the paper.  We first introduce Tambara functors, then give the definitions of ideals.  The section ends with a discussion of prime ideals and the Nakaoka spectrum of a Tambara functor.

\subsection{Tambara functors}

Here we give a very brief introduction to the theory of Tambara functors. We refer the reader to \cite{tambara:1993} for the original definition, and \cite{strickland:2012} for a thorough treatment.

Let $G$ be a finite group. Underlying the definition of $G$-Tambara functors is the \emph{polynomial category} $\mathcal{P}_G$ (also known as the \emph{bispan category}). The objects of $\mathcal{P}_G$ are finite $G$-sets. Given two finite $G$-sets $X$ and $Y$, the hom-set $\mathcal{P}_G(X,Y)$ is the set of isomorphism classes of diagrams
\[X \leftarrow A \to B \to Y\]
(called \emph{bispans}) in the category $\mathsf{set}_G$ of finite $G$-sets. Two such bispans $X \leftarrow A \to B \to Y$ and $X \leftarrow A' \to B' \to Y$ are isomorphic if there exist isomorphisms $A \to A'$ and $B \to B'$ (in $\mathsf{set}_G$) making the diagram
\[\begin{tikzcd}[ampersand replacement=\&,row sep=tiny]
	\& A \& B \\
	X \&\&\& Y \\
	\& {A'} \& {B'}
	\arrow[from=1-2, to=1-3]
	\arrow[from=1-2, to=2-1]
	\arrow["\cong"{description}, from=1-2, to=3-2]
	\arrow[from=1-3, to=2-4]
	\arrow["\cong"{description}, from=1-3, to=3-3]
	\arrow[from=3-2, to=2-1]
	\arrow[from=3-2, to=3-3]
	\arrow[from=3-3, to=2-4]
\end{tikzcd}\]
commute. We denote the isomorphism class of a bispan $X \leftarrow A \to B \to Y$ by $[X \leftarrow A \to B \to Y]$.

For each morphism $f \colon X \to Y$ in $\mathsf{set}_G$, we obtain three distinguished morphisms in $\mathcal{P}_G$:
\begin{align*}
    T_f &:= [X \xleftarrow{\id} X \xrightarrow{\id} X \xrightarrow{f} Y] \in \mathcal{P}_G(X,Y), \\
    N_f &:= [X \xleftarrow{\id} X \xrightarrow{f} Y \xrightarrow{\id} Y] \in \mathcal{P}_G(X,Y), \\
    R_f &:= [Y \xleftarrow{f} X \xrightarrow{\id} X \xrightarrow{\id} X] \in \mathcal{P}_G(Y,X).
\end{align*}

The composition operation of the category $\mathcal{P}_G$ is determined by the following rules:
\begin{enumerate}
    \item For any bispan $X \xleftarrow{h} A \xrightarrow{g} B \xrightarrow{f} Y$, \[T_f \circ N_g \circ R_f = [X \xleftarrow{h} A \xrightarrow{g} B \xrightarrow{f} Y].\]
    \item For any morphisms $f : Y \to Z$ and $g : X \to Y$ in $\mathsf{set}_G$:
    \begin{enumerate}
        \item $T_f \circ T_g = T_{f \circ g}$,
        \item $N_f \circ N_g = N_{f \circ g}$,
        \item $R_g \circ R_f = R_{f \circ g}$.
    \end{enumerate}
    \item For any morphisms $f : X \to Y$ and $g : X' \to Y$ in $\mathsf{set}_G$,
    \[R_f \circ T_g = T_{g'} \circ R_{f'}\]
    and
    \[R_f \circ N_g = N_{g'} \circ N_{f'},\]
    where $f'$ and $g'$ are obtained from the pullback diagram
    \[\begin{tikzcd}[ampersand replacement=\&]
    	{X \times_Y X'} \& {X'} \\
    	X \& Y
    	\arrow["{g'}", from=1-1, to=1-2]
    	\arrow["{f'}"', from=1-1, to=2-1]
    	\arrow["\lrcorner"{anchor=center, pos=0.125}, draw=none, from=1-1, to=2-2]
    	\arrow["g", from=1-2, to=2-2]
    	\arrow["f"', from=2-1, to=2-2]
    \end{tikzcd}.\]
    \item For any morphisms $f \colon Y \to Z$ and $g \colon X \to Y$ in $\mathsf{set}_G$,
    \[N_f \circ T_g = T_{\Pi_f g} \circ N_{f'} \circ R_{\varepsilon}\]
    where $\Pi_f \colon \mathsf{set}_G/Y \to \mathsf{set}_G/Z$ is the \emph{dependent product} functor (right adjoint to the pullback functor $f^* \colon \mathsf{set}_G/Z \to \mathsf{set}_G/Y$), $f'$ is the pullback of $f$ along $\Pi_f g$, and $\varepsilon$ is the counit of the adjunction $f^* \dashv \Pi_f$. In other words, these maps form an \emph{exponential diagram}
    \[\begin{tikzcd}[ampersand replacement=\&]
    	\& \bullet \& \bullet \\
    	X \& Y \& Z
    	\arrow["{f'}", from=1-2, to=1-3]
    	\arrow["\varepsilon"', from=1-2, to=2-1]
    	\arrow[from=1-2, to=2-2]
    	\arrow["\lrcorner"{anchor=center, pos=0.125}, draw=none, from=1-2, to=2-3]
    	\arrow["{\Pi_f g}", from=1-3, to=2-3]
    	\arrow["g"', from=2-1, to=2-2]
    	\arrow["f"', from=2-2, to=2-3]
    \end{tikzcd}.\]
\end{enumerate}

The first rule states that every morphism in $\mathcal{P}_G$ is equal to one of the form $T \circ N \circ R$, i.e. the distinguished morphisms of types $T$, $N$, and $R$ generate the category $\mathcal{P}_G$. The second rule describes how to compose two generating morphisms of the same type. The third rule describes how to rewrite a composite $R \circ T$ in the form $T \circ R$, or a composite $R \circ N$ in the form $N \circ R$. The fourth rule describes how to rewrite a composite $N \circ T$ in the form $T \circ N \circ R$. Overall, this gives a procedure for writing any composite of two morphisms in the form $T \circ N \circ R$:
\begin{align*}
    T \circ N \circ {\color{blue} R \circ T} \circ N \circ R &\overset{(3)}{\rightsquigarrow} T \circ {\color{blue} N \circ T} \circ R \circ N \circ R \\
    &\overset{(4)}{\rightsquigarrow} T \circ T \circ N \circ R \circ {\color{blue} R \circ N} \circ R \\
    &\overset{(3)}{\rightsquigarrow} T \circ T \circ N \circ {\color{blue} R \circ N} \circ R \circ R \\
    &\overset{(3)}{\rightsquigarrow} {\color{red} T \circ T} \circ {\color{Rhodamine} N \circ N} \circ {\color{blue} R \circ R \circ R} \\
    &\overset{(2)}{\rightsquigarrow} T \circ N \circ R,
\end{align*}
and thus the composition operation in $\mathcal{P}_G$ is fully determined by these rules.

An essential property of the category $\mathcal{P}_G$ is that it admits all finite products. The terminal object of $\mathcal{P}_G$ is $\varnothing$, and the product of two objects $X$ and $Y$ in $\mathcal{P}_G$ is the disjoint union $X \amalg Y$ (i.e. the coproduct of $X$ and $Y$ in $\mathsf{set}_G$). For each finite $G$-set $X$, the natural maps $!_X : \varnothing \to X$ and $\nabla_X : X \amalg X \to X$ in $\mathsf{set}_G$ induce morphisms
\begin{align*}
    0_X &:= T_{!_X} \in \mathcal{P}_G(\varnothing, X), \\
    1_X &:= N_{!_X} \in \mathcal{P}_G(\varnothing, X), \\
    \alpha_X &:= T_{\nabla_X} \in \mathcal{P}_G(X \amalg X, X), \\
    \mu_X &:= N_{\nabla_X} \in \mathcal{P}_G(X \amalg X, X)
\end{align*}
in $\mathcal{P}_G$, giving $X$ the structure of a commutative semiring\footnote{A semiring is exactly the same as a ring, except that it need not admit additive inverses. All rings and semirings in this article are unital.} object (with addition $\alpha_X$, multiplication $\mu_X$, additive identity $0_X$, and multiplicative identity $1_X$).

\begin{definition}
    A \emph{$G$-semi-Tambara functor} is a product-preserving functor from $\mathcal{P}_G$ to the category of (all) sets.
\end{definition}

Since product-preserving functors preserve commutative semiring objects, we have that the set $S(X)$ is naturally endowed with the structure of a commutative semiring for all $G$-semi-Tambara functors $S$ and all finite $G$-sets $X$.

\begin{definition}
    A \emph{$G$-Tambara functor} is a $G$-semi-Tambara functor $S$ such that $S(X)$ is in fact commutative ring for all finite $G$-sets $X$. The category $\mathsf{Tamb}_G$ of $G$-Tambara functors has as objects all $G$-Tambara functors and as morphisms all natural transformations between $G$-Tambara functors.
\end{definition}

\begin{notation}
    When working with a $G$-(semi-)Tambara functor $S$, given a morphism $f : X \to Y$ in $\mathsf{set}_G$, one writes $\tr_f, \nm_f, \res_f$ for the functions $S(T_f), S(N_f), S(R_f)$. These operations are called \emph{transfer}, \emph{norm}, and \emph{restriction}, respectively.
\end{notation}

\begin{proposition}\label{prop: basic facts about TNR}
    Let $f\colon X \to Y$ be a morphism in $\mathsf{set}_G$. For any $G$-Tambara functor $S$:
    \begin{enumerate}
        \item $\tr_f \colon S(X) \to S(Y)$ is a homomorphism of underlying additive groups,
        \item $\nm_f \colon S(X) \to S(Y)$ is a homomorphism of underlying multiplicative monoids, and moreover sends $0 \mapsto 0$,
        \item $\res_f \colon S(Y) \to S(X)$ is a ring homomorphism.
    \end{enumerate}
\end{proposition}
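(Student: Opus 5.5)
The plan is to read all three statements off from the fact that a semi-Tambara functor $S$ preserves finite products, so $S(X \amalg Y) \cong S(X) \times S(Y)$. Under this identification, the semiring operations on $S(X)$ are $S(\alpha_X)$, $S(\mu_X)$, $S(0_X)$, $S(1_X)$. Each item then amounts to a commutation relation in $\mathcal{P}_G$: the relevant generator composed with the structure map for $X$ must equal the structure map for $Y$ composed with the product of two copies of the generator. Applying $S$ turns each such relation into the corresponding homomorphism property.

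\emph{Restriction.} We check that $R_f$ commutes with $\alpha$, $\mu$, $0$ and $1$. By rule (3), $R_f \circ T_{\nabla_Y} = T_{g'} \circ R_{f'}$, where the pullback of $\nabla_Y \colon Y \amalg Y \to Y$ along $f$ is $X \amalg X$, with $g' = \nabla_X$ and $f' = f \amalg f$. Since $R_{f \amalg f}$ is the product morphism $R_f \times R_f$ in $\mathcal{P}_G$, this gives additivity. The same computation with $N$ in place of $T$ gives multiplicativity. Pulling back $!_Y$ along $f$ gives $!_X$, so $R_f \circ T_{!_Y} = T_{!_X}$ and $R_f \circ N_{!_Y} = N_{!_X}$. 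Hence $\res_f$ preserves $0$ and $1$.

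\emph{Transfer and norm.} For $T_f$ we use rule (2a): $T_f \circ T_{\nabla_X} = T_{f \circ \nabla_X} = T_{\nabla_Y \circ (f \amalg f)} = T_{\nabla_Y} \circ T_{f \amalg f}$. Likewise $T_f \circ T_{!_X} = T_{!_Y}$, since $f \circ {!_X} = {!_Y}$. The identical argument with (2b) shows that $\nm_f$ preserves products and $1$. In both cases we must identify $T_{f \amalg f}$ (respectively $N_{f \amalg f}$) with the product morphism in $\mathcal{P}_G$. This follows because the product projections $X \amalg X' \to X$ are the restrictions $R$ along the summand inclusions, and rule (3) lets us commute these restrictions past $T_{f \amalg f}$ or $N_{f \amalg f}$, since pulling back along a summand inclusion just picks out the corresponding summand. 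Additive inverses then come for free, because a monoid homomorphism between groups preserves inverses.

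\emph{Norm of zero.} The remaining and least formal point is that $\nm_f(0) = 0$. This requires computing $N_f \circ T_{!_X}$ by rule (4), via the exponential diagram for $g = {!_X}$. The dependent product $\Pi_f(\varnothing \to X)$ is the $G$-set of sections over points of $Y$. A point $y$ has a section over its fibre only if the fibre $f^{-1}(y)$ is empty, so $\Pi_f(\varnothing)$ is the subset of $Y$ whose fibres are empty. For surjective $f$ this is empty, so $N_f \circ T_{!_X} = T_{!_Y} \circ (\cdots)$ factors through $\varnothing$, which gives $0$. In general $\nm_f(0)$ is $0$ on the image of $f$ and $1$ off it. I expect this case analysis to be the main obstacle: the claim $\nm_f(0) = 0$ holds for surjective $f$, which is presumably the intended setting for norms, and I would remark on that.
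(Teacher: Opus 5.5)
The paper gives no proof of this proposition; it is quoted as standard background, so there is no argument of the authors' to compare against. Your proof is the standard derivation from the bispan presentation, and it is correct. Each item reduces to an identity in $\mathcal{P}_G$ relating a generator, the structure maps $\alpha,\mu,0,1$, and a product morphism. Your identification of $T_{f\amalg f}$, $N_{f\amalg f}$, $R_{f\amalg f}$ with product morphisms, by commuting them past the projections $R_i$ along summand inclusions, is exactly what is needed.

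You use rule (3) in its type-correct form $R_f\circ T_g=T_{g'}\circ R_{f'}$, $R_f\circ N_g=N_{g'}\circ R_{f'}$, with $g'$ the base change of $g$ along $f$. The printed version attaches the primes to the other legs of the square and has $N_{f'}$ where $R_{f'}$ belongs, so your reading is the right one.

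On the last point you should not hedge: your computation shows the statement as printed is \emph{false} for non-surjective $f$, not merely delicate. The simplest counterexample is $f={!_Y}\colon\varnothing\to Y$. Here $S(\varnothing)$ is the zero ring, so its unique element is both $0$ and $1$. By the very definition $1_Y=N_{!_Y}$, the map $\nm_{!_Y}$ sends that element to $1_Y$, which is nonzero whenever $S(Y)\neq 0$ (e.g.\ $S=\uA$, $Y=G/G$). More generally, your exponential-diagram computation gives $N_f\circ T_{!_X}=T_\iota\circ N_{!_{Y'}}$ with $\iota\colon Y'=Y\setminus f(X)\hookrightarrow Y$. Hence $\nm_f(0)=\tr_\iota(1)$, which is $(0,1)$ under $S(Y)\cong S(f(X))\times S(Y')$, as you said.

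So the clause ``sends $0\mapsto 0$'' needs the hypothesis that $f$ is surjective. Nothing downstream in the paper is affected, since the only norms used are the $\nm^K_H$ along the surjections $G/H\to G/K$ (e.g.\ $\nm^K_e$ in the proof of \Cref{lemma: zero anywhere is zero everywhere}). You should state the corrected hypothesis explicitly rather than as a presumption.
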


If $g \colon X \to Y$ is an \emph{isomorphism} in $\mathsf{set}_G$, one has
\[T_g = N_g = R_{g^{-1}}\]
in $\mathcal{P}_G$, and thus $\tr_g = \nm_g = \res_{g^{-1}}$. Moreover, $R_{\id_X} = T_{\id_X} = N_{\id_X}$ is an identity morphism in $\mathcal{P}_G$. In particular, one obtains a canonical left action of $\operatorname{Aut}_{\mathsf{set}_G}(X)$ on $S(X)$, called the \emph{Weyl action}.
\begin{example}
    When $X= G/H$ for some subgroup $H\leq G$ there is a canonical identification $\operatorname{Aut}_{\mathsf{set}_G}(G/H)\cong N_G(H)/H$, where $N_G(H)$ is the normalizer of $H$ in $G$.  In particular, when $H=e$ we have $\operatorname{Aut}_{\mathsf{set}_G}(G/e)\cong G$, and $T(G/e)$ is a ring with $G$-action.
\end{example}

In the definition of Tambara functors given above, one must provide infinitely many data to specify a $G$-Tambara functor (e.g. a commutative ring for \emph{each} finite $G$-set). However, it turns out that a Tambara functor is determined by a finite subcollection of these data. Recall that, by definition, a $G$-(semi-)Tambara functor is a product-preserving functor $\mathcal{P}_G \to \mathsf{Set}$, and that the product in $\mathcal{P}_G$ is given on objects by disjoint union. Since each finite $G$-set decomposes as the disjoint union of its orbits, and every orbit in a finite $G$-set is isomorphic to $G/H$ for some subgroup $H \leq G$, we see that the value of any $G$-(semi-)Tambara functor $S$ on a finite $G$-set $X$ is determined (up to isomorphism) by the values $S(G/H)$ as $H$ ranges over the subgroups of $G$. One can also carry out this decomposition argument for morphisms.

\begin{proposition}\label{prop: short definition of Tambara functors}
    Let $S$ and $S'$ be $G$-Tambara functors such that:
    \begin{enumerate}
        \item $S(G/H) = S'(G/H)$ (as commutative rings) for all subgroups $H \leq G$,
        \item For all inclusions $H \leq K$ of subgroups of $G$, letting $q \colon G/H \to G/K$ be the canonical quotient $q(xH) = xK$:
        \begin{enumerate}
            \item $S(T_f) = S'(T_f),$
            \item $S(N_f) = S'(N_f),$
            \item $S(R_f) = S'(R_f),$
        \end{enumerate}
        \item For all subgroups $H \leq G$ and all elements $g \in G$, letting $c \colon G/H \to G/gHg^{-1}$ be the canonical isomorphism $c(xH) = xg^{-1}(gHg^{-1})$,
        \begin{enumerate}
            \item $S(T_c) = S'(T_c)$,
            \item $S(N_c) = S'(N_c)$,
            \item $S(R_c) = S'(R_c)$.
        \end{enumerate}
    \end{enumerate}
    Then one obtains a canonical isomorphism $S \cong S'$. As noted previously, conditions (3a), (3b), and (3c) are equivalent.
\end{proposition}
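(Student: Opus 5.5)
The plan is to build the isomorphism $S \cong S'$ orbit by orbit, and then check that it respects every generating morphism of $\mathcal{P}_G$. Because $\mathcal{P}_G$ is generated by morphisms of types $T$, $N$, $R$ (rule (1)), naturality only has to be verified on the three families $T_f$, $N_f$, $R_f$ for $f$ a map of finite $G$-sets.

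\textbf{Step 1: the components.} For each finite $G$-set $X$, fix a decomposition $X \cong \coprod_i G/H_i$ into orbits; this is an isomorphism $\phi_X$ in $\mathsf{set}_G$. Both functors preserve products, and products in $\mathcal{P}_G$ are disjoint unions. Also, an isomorphism $g$ in $\mathsf{set}_G$ satisfies $T_g = N_g = R_{g^{-1}}$, so it acts identically under all three operations. Together these give identifications
\[ S(X) \cong \prod_i S(G/H_i) = \prod_i S'(G/H_i) \cong S'(X), \]
using hypothesis (1) in the middle. Call the composite $\eta_X$.

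Independence of the chosen decomposition needs care. Two decompositions differ by a permutation of orbits together with isomorphisms $G/H \to G/gHg^{-1}$ composed with Weyl automorphisms. Every equivariant map between orbits factors as such a conjugation $c$ followed by a quotient $q$. Hypothesis (3) handles the conjugation part, and the automorphisms of $G/H$ are of the form $c$ composed with $q$ for $K = H$, so hypotheses (2) and (3) make the identification canonical.

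\textbf{Step 2: reducing the morphisms.} Let $f\colon X \to Y$ be any map of finite $G$-sets. Decompose $Y$ into orbits and pull the decomposition back to $X$. Using rule (3) for restriction along the coproduct inclusions, together with product preservation, each of $T_f$, $N_f$, $R_f$ splits as a product over the orbits of $Y$. So we may assume $Y = G/K$. Then $X = \coprod_j G/H_j$, and $f = \nabla \circ \coprod_j f_j$ with $f_j\colon G/H_j \to G/K$.

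By rule (2) we get $T_f = T_\nabla \circ \prod_j T_{f_j}$, and similarly for $N_f$. The fold-map pieces $T_\nabla$ and $N_\nabla$ are exactly the semiring operations $\alpha$ and $\mu$. These agree for $S$ and $S'$ because the rings coincide by hypothesis (1). For restriction, $R_f$ is the diagonal $R_\nabla$ followed by the $R_{f_j}$, and $R_\nabla$ is the categorical diagonal for the product, so it is preserved automatically.

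\textbf{Step 3: orbit maps.} Each $f_j$ factors as $f_j = q \circ c$, where $c\colon G/H_j \to G/gH_jg^{-1}$ is a canonical conjugation isomorphism and $q$ is a canonical quotient with $gH_jg^{-1} \le K$. Rule (2) splits $T_{f_j}$, $N_{f_j}$, $R_{f_j}$ into the corresponding two pieces. Hypotheses (2) and (3) then give agreement for $S$ and $S'$.

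Finally, since every morphism of $\mathcal{P}_G$ is a composite $T \circ N \circ R$, naturality on generators gives naturality everywhere. Each $\eta_X$ is a bijection, so $\eta$ is a natural isomorphism $S \cong S'$.

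\textbf{Expected obstacle.} I expect the main difficulty to be the bookkeeping of the identifications $\phi_X$. One must check that the comparison isomorphisms intertwine correctly with the products, which needs rule (3) applied to coproduct inclusions, and that different orbit decompositions give the same $\eta_X$. I would handle this by fixing a single decomposition once and for all, and showing that any other choice differs by an automorphism whose action has already been matched via hypotheses (2) and (3).
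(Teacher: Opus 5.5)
Your proposal is correct and follows the route the paper indicates. The paper gives no detailed proof: it only notes that product preservation plus orbit decompositions determine $S$ on objects and that the same decomposition argument works for morphisms, which is what your Steps 1--3 carry out (splitting along the orbits of the target via rule (3), identifying fold maps with the ring operations, and factoring orbit maps as $q \circ c$); one case worth stating explicitly is when the preimage of an orbit $G/K$ is $\varnothing$, where $T_f$ and $N_f$ become $0_{G/K}$ and $1_{G/K}$, which again agree by hypothesis (1).
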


For a containment of subgroups $H \leq K$, one writes $\tr^K_H \colon S(G/H) \to S(G/K)$ as shorthand for the canonical transfer described above, and likewise for $\nm^K_H \colon S(G/H) \to S(G/K)$ and $\res^K_H \colon S(G/K) \to S(G/H)$. For a subgroup $H$ and an element $g \in G$, one writes $\conj_{g,H} \colon S(G/H) \to S(G/gHg^{-1})$ (or simply $\conj_g$ when the subgroup $H$ is clear from context) for the canonical isomorphism described above. \Cref{prop: short definition of Tambara functors} states that a $G$-Tambara $S$ is completely determined by the commutative rings $S(G/H)$ together with the maps $\tr^K_H$, $\nm^K_H$, $\res^K_H$, and $\conj_{g,H}$. As a consequence, morphisms of Tambara functors are also determined by finitely many data.

\begin{proposition}
    Let $S$ and $S'$ be $G$-Tambara functors. Let \[\{\varphi_H : S(G/H) \to S'(G/H)\}_{H \leq G}\] be a collection of ring homomorphisms. Then the following are equivalent:
    \begin{enumerate}
        \item There exists a unique morphism of Tambara functors $\Phi : S \to S'$ such that $\Phi_{G/H} = \varphi_H$ for all subgroups $H \leq G$,
        \item The collection $\{\varphi_H\}_{H \leq G}$ is natural with respect to the operations $\tr^K_H$, $\nm^K_H$, $\res^K_H$, and $\conj_{g,H}$.
    \end{enumerate}
\end{proposition}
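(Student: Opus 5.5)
The plan is to prove the two directions separately. The direction (1) $\Rightarrow$ (2) is formal; the direction (2) $\Rightarrow$ (1) reduces to \Cref{prop: short definition of Tambara functors} applied to a suitable auxiliary Tambara functor.

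For (1) $\Rightarrow$ (2), suppose $\Phi\colon S\to S'$ is a morphism of Tambara functors with $\Phi_{G/H}=\varphi_H$. Naturality of $\Phi$ as a natural transformation of functors $\mathcal{P}_G\to\mathsf{Set}$, applied to the morphisms $T_q, N_q, R_q$ (for $q\colon G/H\to G/K$) and $T_c$ (for $c$ a conjugation isomorphism), gives exactly the commutation of $\varphi$ with $\tr^K_H,\nm^K_H,\res^K_H,\conj_{g,H}$.

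For (2) $\Rightarrow$ (1), I would first extend $\varphi$ to every finite $G$-set. Choose a decomposition $X\cong\coprod_i G/H_i$; product preservation gives $S(X)\cong\prod_i S(G/H_i)$, and I define $\Phi_X=\prod_i\varphi_{H_i}$. Independence of the chosen decomposition follows from compatibility with the $\conj_{g,H}$ and with the Weyl actions, since $\varphi$ commutes with these. Uniqueness is immediate, because any morphism is forced to have this form by product preservation.

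The substantive step is naturality of $\Phi$ with respect to every morphism of $\mathcal{P}_G$. Since every bispan is $T_f\circ N_g\circ R_h$, it suffices to check naturality for $T_f$, $N_f$, $R_f$ with $f\colon X\to Y$ an arbitrary map of finite $G$-sets. Using product preservation, I would reduce to $Y=G/K$ an orbit. Since $T$, $N$ and $R$ turn disjoint unions in the source into sums, products and tuples respectively, I would further reduce to $X=G/H$ an orbit. Any equivariant map $G/H\to G/K$ factors as a conjugation isomorphism $G/H\to G/gHg^{-1}$ followed by a canonical quotient, where $gHg^{-1}\le K$. For such maps, naturality is exactly the hypothesis (2), and compositions of natural squares are natural.

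The main obstacle is the bookkeeping in this reduction to orbits, particularly for norms along non-surjective or fold maps. There I need that $\nm_{\nabla}$ is multiplication and $\nm$ along $\varnothing\to Y$ gives $1$; both are preserved because each $\varphi_H$ is a ring homomorphism, and this is exactly where that hypothesis on $\varphi_H$ is used. Additivity of $\tr_\nabla$ is handled the same way. Once naturality holds on the generators $T$, $N$, $R$, it holds on all composites, so $\Phi$ is a morphism of Tambara functors.
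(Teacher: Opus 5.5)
Your argument is correct, and it is the orbit-decomposition argument the paper only alludes to: the paper states this proposition without proof, as a consequence of the reduction to orbits behind \Cref{prop: short definition of Tambara functors}. One correction to your write-up: your first paragraph announces an auxiliary Tambara functor, but your body never uses one. The direct check is what carries the proof, and it is self-contained: naturality for $T_f$, $N_f$, $R_f$, via reduction to orbits, the fold and empty maps (where the ring-homomorphism hypothesis enters), and the factorization of orbit maps as $q \circ c$.
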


\begin{example}\label{example FP}
    Let $R$ be a commutative ring with $G$-action. The \emph{fixed point Tambara functor} $\FP(R)$ is given by $\FP(R) = R^H$ for all $H\leq G$. The restriction maps $\res^K_H : R^K \to R^H$ are inclusions, and the transfer and norm maps $\tr^K_H, \nm^K_H : R^H \to R^K$ are given by the Galois-theoretic trace and norm (respectively).

    The construction $R\mapsto \FP(R)$ is functorial in equviariant ring maps.  The functor $\FP$ is right adjoint to the functor $T\mapsto T(G/e)$ from $G$-Tambara functors to commutative rings with $G$-action. The unit map $T\to \FP(T(G/e))$ is completely determined by the fact that it is the identity on level $G/e$.
\end{example}

\begin{example}\label{example: Burnside}
    The \emph{Burnside ring} $A(G)$ of a finite group $G$ is the group completion of the commutative monoid whose elements are isomorphism classes of finite $G$-sets and with addition given by disjoint union. In other words, the elements of $A(G)$ are formal differences of isomorphism classes of $G$-sets. The multiplication is induced by cartesian product of finite $G$-sets.  The assignment $G/H\mapsto A(H)$ defines a Tambara functor called the \emph{Burnside Tambara functor}, denoted $\uA$.  The restriction maps $\res^K_H\colon A(K)\to A(H)$ are obtained by restriction of $K$-action to $H$-action. The transfers and norms $\tr^K_H,\nm^K_H\colon A(H)\to A(K)$ are induced by induction and coinduction, respectively.  The conjugations $\conj_{g,H} \colon A(H)\to A(gHg^{-1})$ are induced by pushing forward along the isomorphism of categories $\mathrm{set}_H\to \mathrm{set}_{gHg^{-1}}$ induced by the conjugation isomorphism $H \cong gHg^{-1}$.

    The Burnside Tambara functor is the initial Tambara functor: for any other Tambara functor $T$ there is a unique map $\uA \to T$.  Thus, one should think of the Burnside Tambara functor as the analogue of the integers in this context.
\end{example}

We end this section with a basic fact from the theory of Tambara functors, which will play an important role in our arguments.

\begin{lemma}\label{lemma: zero anywhere is zero everywhere}
    If $T$ is a Tambara functor such that $T(G/H) = 0$ for some $H\leq G$, then $T$ is the zero Tambara functor (i.e. $T(G/K)=0$ for all $K\leq G$).
\end{lemma}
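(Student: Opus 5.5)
The plan is to use the unit $1 \in T(G/H)$, the fact that restriction and norm are multiplicative and unital, and the fact that a commutative ring is zero exactly when $1 = 0$ in it. Suppose $T(G/H) = 0$, so $1 = 0$ in $T(G/H)$. I would move this equation to every other level in two stages: first down to the bottom level $G/e$, then up to an arbitrary $G/K$.

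For the first stage, restriction $\res^H_e \colon T(G/H) \to T(G/e)$ is a ring homomorphism by \cref{prop: basic facts about TNR}(3), so it sends $1 \mapsto 1$ and $0 \mapsto 0$. Hence $1 = \res^H_e(1) = \res^H_e(0) = 0$ in $T(G/e)$, and therefore $T(G/e) = 0$.

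For the second stage, fix $K \leq G$ and consider the norm $\nm^K_e \colon T(G/e) \to T(G/K)$. By \cref{prop: basic facts about TNR}(2) it is a homomorphism of multiplicative monoids, so it is unital, and it also sends $0 \mapsto 0$. In $T(G/e)$ we have $1 = 0$, so $1 = \nm^K_e(1) = \nm^K_e(0) = 0$ in $T(G/K)$. Thus $T(G/K) = 0$ for every $K$. By the decomposition of finite $G$-sets into orbits and product preservation, $T(X) = 0$ for every finite $G$-set $X$, so $T$ is the zero Tambara functor.

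The only point needing care is that norms are unital. This is asserted by \cref{prop: basic facts about TNR}(2), and it can also be checked directly in $\mathcal{P}_G$: $N_f \circ 1_X = N_f \circ N_{!_X} = N_{!_Y} = 1_Y$ by composition rule (2b). Using the norm rather than the transfer is what makes the second stage work, since a transfer need not send $1$ to $1$.
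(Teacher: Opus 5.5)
Your proof is correct and matches the paper's argument: restrict the equation $1=0$ down to $T(G/e)$ via the ring map $\res^H_e$, then push it up to each $T(G/K)$ via the unital, zero-preserving norm $\nm^K_e$. Your direct check that norms are unital, $N_f \circ N_{!_X} = N_{!_Y}$, and your remark on arbitrary finite $G$-sets are harmless extra detail.
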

\begin{proof}
    If $T(G/H)=0$ then since $\res^H_e\colon T(G/H)\to T(G/e)$ is a ring map we must have $T(G/e)=0$.  Now for any other $K\leq G$, $\nm^K_e\colon T(G/e)\to T(G/K)$ must send $0$ to $0$ and $1$ to $1$ (\Cref{prop: basic facts about TNR}).  But $0=1$ in $T(G/e)$, so $0=1$ in $T(G/K)$ for all $K\leq G$, hence $T(G/K)=0$ for all $K\leq G$.
\end{proof}

\subsection{Tambara ideals}

Let $T$ be a $G$-Tambara functor.
\begin{definition}
    A \emph{Tambara ideal} (or just \emph{ideal}) $I$ of $T$ is a collection of ideals $I(G/H)\subseteq T(G/H)$ for all $H\leq G$ which together are closed under the restriction, transfer, conjugation, and norm maps of $T$.
\end{definition}

It is worth stating explicitly that when $I$ is an ideal of a Tambara functor $T$, one can form the quotient $T/I$, defined levelwise by
\[(T/I)(G/H) = T(G/H)/I(G/H),\]
whose structure maps $\tr, \nm, \res, \conj$ are inherited from $T$~\cite[Proposition 2.6]{nakaoka:2012}.

Just like ideals of commutative rings, we can build new ideals out of olds ones.  For instance, if $I$ and $J$ are two ideals of $T$ we can take the sum $I+J$ defined by $(I+J)(G/H) = I(G/H)+J(G/H)$.  Just as for commutative rings, one can check that $I+J$ is the smallest ideal of $T$ which contains both $I$ and $J$; see \cite[\S 3.4]{nakaoka:2012} for details.  

We can also define the product $IJ$ to be the smallest ideal of $T$ which contains the ideals $I(G/H)\cdot J(G/H)\subseteq T(G/H)$.  That is, it is the intersection of all ideals $K$ such that $I(G/H)\cdot J(G/H)\subseteq K(G/H)$ for all $H\leq G$. An ideal $\mf{p} \subseteq T$ is \emph{prime} if and only if it is proper (i.e. $\mf{p} \neq T$) and $IJ\subseteq \mf{p}$ implies that either $I\subseteq \mf{p}$ or $J\subseteq \mf{p}$.  In practice, this definition is impractical for checking whether or not a given ideal is prime.  We make use of a reformulation, due to Nakaoka.

\begin{definition}\label{definition: multiplicative translates}
    Let $x\in T(G/H)$.  A \emph{multiplicative translate} of $x$ is any element in $T$ of the form $\nm^L_{gKg^{-1}}\conj_{g,K}\res^H_K(x)$ for some $g\in G$ and subgroups $K\leq H$ and $L \geq gKg^{-1}$. 
\end{definition}

\begin{remark}\label{remark: finitely many translates}
    Observe that since $G$ has finitely many elements and subgroups, there are finitely many multiplicative translates of any element $x$.
\end{remark}

The process of forming multiplicative translates is well-behaved under iteration.

\begin{lemma}\label{lem:MT of MT}
    Let $x \in T(G/H)$. Let $x'$ be a multiplicative translate of $x$, and let $x''$ be a multiplicative translate of $x'$. Then $x''$ is a (non-empty) product of multiplicative translates of $x$.
\end{lemma}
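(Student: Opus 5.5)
The plan is to write $x' = \nm^L_{gKg^{-1}}\conj_{g}\res^H_K(x)$ with $x' \in T(G/L)$, and $x'' = \nm^{L'}_{g'K'g'^{-1}}\conj_{g'}\res^L_{K'}(x')$ with $K' \leq L$. Then push the outer restriction and conjugation past the inner norm, using the double coset formula for $\res\circ\nm$. That formula is rule (3) in the polynomial category, $R_f\circ N_g = N_{g'}\circ R_{f'}$, applied to the pullback of $G/gKg^{-1}\to G/L$ along $G/K'\to G/L$.

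Concretely, this pullback decomposes into orbits indexed by double cosets $K'\backslash L/gKg^{-1}$. This gives
\[
\res^L_{K'}\nm^L_{gKg^{-1}}(y) \;=\; \prod_{\gamma \in K'\backslash L/gKg^{-1}} \nm^{K'}_{K'\cap \gamma gKg^{-1}\gamma^{-1}}\,\conj_{\gamma}\,\res^{gKg^{-1}}_{\gamma^{-1}K'\gamma\cap gKg^{-1}}(y),
\]
where the product comes from $N$ of a fold map to a disjoint union of orbits. Here $y=\conj_g\res^H_K(x)$.

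The restriction-and-conjugation part of each factor combines into a single $\conj_{\gamma g}\res^H_{K_\gamma}(x)$, where $K_\gamma = K\cap g^{-1}\gamma^{-1}K'\gamma g\le H$. This uses that restrictions compose, that conjugations commute with restrictions in the evident way, and rules (2) and (3) for isomorphisms. Each factor is thus $\nm^{K'}_{(\gamma g)K_\gamma(\gamma g)^{-1}}\conj_{\gamma g}\res^H_{K_\gamma}(x)$.

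Next, apply $\conj_{g'}$ followed by $\nm^{L'}_{g'K'g'^{-1}}$. Both are multiplicative monoid maps by \cref{prop: basic facts about TNR}, so they distribute over the product. Conjugation commutes with norms, moving the norm to the conjugate subgroups, and norms compose (rule (2b)). So each factor becomes $\nm^{L'}_{hK_\gamma h^{-1}}\conj_{h}\res^H_{K_\gamma}(x)$ with $h=g'\gamma g$. This is a multiplicative translate of $x$, and the product is non-empty because the set of double cosets is non-empty.

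The main obstacle is bookkeeping in the double coset formula: checking that the subgroups line up so that $hK_\gamma h^{-1}\le L'$ and that conjugations compose correctly. I would handle this most cleanly at the level of $G$-sets. The composite $R\circ N\circ R$ in $\mathcal{P}_G$ is itself of the form $N\circ R$ by rule (3), and any $N_f\circ R_h$ between orbits splits, after decomposing the source into orbits, into a product of basic ``$\nm\,\conj\,\res$'' composites. Each orbit map factors as an isomorphism followed by a canonical quotient.

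The argument would therefore be phrased as follows. Every composite $N\circ R$ from $G/H$ to $G/L'$ equals $N_{f}\circ R_{h}$ for a span $G/H\xleftarrow{h}A\xrightarrow{f}G/L'$. Decomposing $A=\coprod A_i$ into orbits writes its value on $x$ as $\prod_i \nm_{f_i}\res_{h_i}(x)$, and each factor is visibly a multiplicative translate. Non-emptiness holds because $A$ is nonempty: it maps to $G/L'$, which is nonempty.
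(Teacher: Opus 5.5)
Your argument is correct and is the same as the paper's. The paper also reduces, via the double coset formula for $\res\circ\nm$, to showing that a restriction of $x'$ is a product of multiplicative translates of $x$, and then uses that $\nm$ is multiplicative and commutes with $\conj_g$; your explicit bookkeeping with $K_\gamma = K\cap g^{-1}\gamma^{-1}K'\gamma g$ and $h=g'\gamma g$ checks out.

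One small slip is in your final sentence. $A$ merely mapping to the nonempty set $G/L'$ does not make $A$ nonempty, since the empty $G$-set maps there too. The correct reason is that $A\to G/L'$ is surjective, being a composite of an orbit map with a pullback of an orbit map. This is exactly your earlier observation that $K'\backslash L/gKg^{-1}$ is nonempty.
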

\begin{proof}
    Since $\nm$ is multiplicative and commutes with $\conj_g$, it suffices to show that any restriction of $x'$ is a product of multiplicative translates of $x$. This follows from the double-coset formula relating norms and restrictions.
\end{proof}

\begin{definition}
    A \emph{generalized product} of two elements $x$ and $y$ in $T$ is any element which is a product of a multiplicative translate of $x$ with a multiplicative translate of $y$.
\end{definition}

\begin{definition}
    Let $I\subseteq T$ be an ideal and let $x\in T(G/H)$ and $y\in T(G/K)$ be two elements in $T$.  The proposition $\Q(x,y,I)$ is the statement: every generalized product of $x$ and $y$ is in $I$.
\end{definition}

The statement $\Q(x,y,I)$ is the correct generalization to Tambara functors of the statement ``$xy\in I$'' in commutative algebra, and allows for a useful reformulation of prime ideals.  We stress that it is entirely possible, and often the case, that some generalized products of $x$ and $y$ are in $I$ without $\Q(x,y,I)$ being true.

\begin{theorem}[Nakaoka {\cite[Corollary 4.5]{nakaoka:2012}}]
    A proper Tambara ideal $I \subset T$ is prime if and only if $\Q(x,y,I)$ implies that $x \in I$ or $y \in I$.
\end{theorem}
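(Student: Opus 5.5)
The plan is to compare Nakaoka's criterion with the definition of primality through the ideal generated by a single element. For $x \in T(G/H)$, write $\langle x\rangle$ for the smallest Tambara ideal containing $x$. The first step is to describe $\langle x\rangle$ explicitly: at level $G/L$ it is the ordinary ideal of $T(G/L)$ generated by all elements $\tr^L_{M}(a\cdot m)$, where $a\in T(G/M)$ and $m$ is a multiplicative translate of $x$ lying at level $G/M$.

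To see this description is correct, one checks that the levelwise collection is closed under the structure maps.
\begin{itemize}
\item Restriction: closure follows from the double coset formula for $\res\circ\tr$, combined with the fact (\cref{lem:MT of MT}) that restrictions of translates are products of translates.
\item Conjugation: this is immediate.
\item Norm: closure uses the exponential formula (rule (4) for $N\circ T$), together with the fact that norms of sums decompose as sums of transfers of products of norms and restrictions. Every summand then contains at least one factor that is a norm of a conjugate of a restriction of some $\tr(a m)$. Re-expanding such a factor, it lands in the span of transfers of multiples of translates, again by \cref{lem:MT of MT}.
\end{itemize}
This closure check is the main technical obstacle. Much of it is standard from \cite{nakaoka:2012}, where this description of principal ideals is given, so I would cite or adapt that.

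Next, I would show that $\langle x\rangle\langle y\rangle$ is generated by the generalized products of $x$ and $y$. Since $\langle x\rangle\langle y\rangle$ contains the levelwise products of generators, the inclusion $\supseteq$ is clear. For $\subseteq$:
\begin{itemize}
\item By Frobenius reciprocity, $\tr(a m)\tr(b n) = \tr(\ldots)$, a sum of transfers of $a'b'\,m'n'$, where $m'$ and $n'$ are restrictions of $m$ and $n$. These restrictions are again products of translates of $x$ and of $y$ respectively.
\item The ideal generated by all generalized products is itself closed under norms, restrictions and transfers, by the same computation as in the first step.
\end{itemize}
Hence $\langle x\rangle\langle y\rangle$ equals the ideal generated by the generalized products of $x$ and $y$. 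It follows that $\Q(x,y,I)$ holds if and only if $\langle x\rangle\langle y\rangle\subseteq I$.

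Finally, I would prove the two directions of the theorem.
\begin{itemize}
\item ($\Rightarrow$) If $I$ is prime and $\Q(x,y,I)$ holds, then $\langle x\rangle\langle y\rangle\subseteq I$. Primality gives $\langle x\rangle\subseteq I$ or $\langle y\rangle\subseteq I$, so $x\in I$ or $y\in I$.
\item ($\Leftarrow$) Suppose $JK\subseteq I$ with $J\not\subseteq I$. Pick $x\in J\setminus I$. For any $y\in K$, every generalized product of $x$ and $y$ lies in $J(G/L)K(G/L)\subseteq JK\subseteq I$, since translates of $x$ stay in $J$ and translates of $y$ stay in $K$. So $\Q(x,y,I)$ holds, the hypothesis gives $y\in I$, and therefore $K\subseteq I$.
\end{itemize}
Note that ($\Leftarrow$) does not need the description of $\langle x\rangle\langle y\rangle$ at all. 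The real content of the first two steps is therefore only needed for ($\Rightarrow$).
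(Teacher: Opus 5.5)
Your proof is correct. The paper gives no proof of this statement; it quotes Nakaoka, in the reformulation of Calle--Ginnett. Your argument is the standard route to Nakaoka's criterion: reduce primality to principal ideals, then show that $\Q(x,y,I)$ forces $\langle x\rangle\langle y\rangle\subseteq I$. The direction ($\Leftarrow$) is handled cleanly and needs nothing beyond closure of $J$ and $K$ under restriction, conjugation and norm.

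For ($\Rightarrow$), only the inclusion $\langle x\rangle\langle y\rangle\subseteq I$ is used, and your Frobenius/double-coset computation already gives it. That computation also produces conjugations $\conj_\gamma$, which are harmless by \cref{lem:MT of MT}. It shows $\langle x\rangle(G/L)\cdot\langle y\rangle(G/L)\subseteq I(G/L)$ for every $L$, and by the definition of the product ideal this suffices. So you can drop both the closure check for the ideal generated by generalized products and the reverse inclusion.

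Within this paper, the same inclusion could also be read off from \cref{prop: Q and translates}. Apply it twice, using that $\Q$ is symmetric in its first two arguments: this upgrades $\Q(x,y,I)$ to $\Q(w,z,I)$ for all $w\in\langle x\rangle$ and $z\in\langle y\rangle$, and in particular gives $wz\in I$. That route proves a stronger statement, since it controls all multiplicative translates of $w$ and $z$. The price is Tambara reciprocity for norms of sums (\cref{lemma: Q and sums}) and \cref{lemma: Q and transfers}. Your direct product computation needs nothing beyond the cited description of principal ideals.
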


\begin{remark}
    That statement of this theorem is not how it is phrased in \cite{nakaoka:2012}.  The formulation of Nakaoka's theorem in terms of the statements $\Q(-,-,-)$ is first given in \cite[Definition 2.6]{CalleGinnett:23}.
\end{remark}

\subsection{Nakaoka spectra}

With the definition of prime ideals in hand, we turn to the generalization of the Zariski spectrum.
\begin{definition}
    The \emph{Nakaoka spectrum} of a Tambara functor $T$, denoted $\Spec(T)$, is the set of prime Tambara ideals of $T$.
\end{definition}

The Nakaoka spectrum is naturally a poset, ordered by inclusion of prime ideals.  It is also a topological space, with the closed sets being those of the form
\[
    V(I) = \{ \mf{p}\in \Spec(T)\mid I\subseteq \mf{p}\}
\]
for $I$ any ideal of $T$. This space satisfies several properties familiar from commutative algebra.

\begin{theorem}[{\cite[Theorem A]{CMQSV:24}}]
    The space $\Spec(T)$ is a quasi-compact, sober space.
\end{theorem}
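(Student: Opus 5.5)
The statement combines two separate claims, and I would prove each by transporting the classical argument, using only the facts that ideals form a lattice under $+$ and the product, and that primality is defined lattice-theoretically.

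\textbf{Preliminaries.} First, an ideal $I\subseteq T$ is proper if and only if $1\notin I(G/e)$. Indeed, if $1\in I(G/H)$ for some $H$, then $\res^H_e$ gives $1\in I(G/e)$. Conversely, if $1\in I(G/e)$, then applying $\nm^K_e$ gives $1\in I(G/K)$ for every $K$; this is the argument of \Cref{lemma: zero anywhere is zero everywhere}.

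Second, from the definition of the product as the smallest ideal containing the levelwise products, I would record three facts.
\begin{itemize}
    \item $IJ\subseteq I\cap J$, because $I\cap J$ is an ideal containing each $I(G/H)J(G/H)$.
    \item $(I+I')(J+J')\subseteq IJ+IJ'+I'J+I'J'$, because the right side is an ideal containing the levelwise products of the left side.
    \item $TT=T$, since $1\cdot 1=1$.
\end{itemize}
Finally, for any prime $\mathfrak q$ we have $V(I)\cup V(J)=V(IJ)$. The inclusion $\subseteq$ follows from $IJ\subseteq I\cap J$, and $\supseteq$ is the definition of primality.

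\textbf{Quasi-compactness.} The first step is to show that every proper ideal lies in a prime ideal. The union of a chain of proper ideals is again an ideal, and it is proper because $1$ lies in none of their $G/e$ levels. By Zorn's lemma, a proper ideal is therefore contained in a maximal proper ideal $\mathfrak m$.

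Next I would check that $\mathfrak m$ is prime. Suppose $IJ\subseteq\mathfrak m$ but $I,J\not\subseteq\mathfrak m$. By maximality $\mathfrak m+I=\mathfrak m+J=T$. Then
\[ T=TT=(\mathfrak m+I)(\mathfrak m+J)\subseteq \mathfrak m+IJ\subseteq\mathfrak m, \]
which is a contradiction.

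Now let $\{V(I_\alpha)\}$ be a family of closed sets with empty intersection. Then $\bigcap_\alpha V(I_\alpha)=V(\sum_\alpha I_\alpha)=\varnothing$, so $\sum_\alpha I_\alpha$ lies in no prime and hence is all of $T$. Therefore $1\in(\sum_\alpha I_\alpha)(G/e)$. The sum of ideals is computed levelwise, with arbitrary sums meaning finite sums of elements, so $1$ lies in a finite subsum. That finite subfamily already has empty intersection, which is quasi-compactness.

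\textbf{Sobriety.} For $T_0$: the closure of $\{\mathfrak p\}$ is $V(\mathfrak p)$. So if two points have the same closure, then $\mathfrak p\in V(\mathfrak q)$ and $\mathfrak q\in V(\mathfrak p)$, giving $\mathfrak p=\mathfrak q$.

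For existence of generic points, let $Z=V(I)$ be irreducible, in particular nonempty. Set $\mathfrak p=\bigcap_{\mathfrak q\in Z}\mathfrak q$, which is an ideal because ideals are closed under intersection levelwise. Then $V(\mathfrak p)=Z$. The ideal $\mathfrak p$ is proper because $Z\neq\varnothing$.

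It remains to show $\mathfrak p$ is prime. Suppose $AB\subseteq\mathfrak p$. Then $Z\subseteq V(AB)=V(A)\cup V(B)$, so $Z=(Z\cap V(A))\cup(Z\cap V(B))$. Irreducibility gives, say, $Z\subseteq V(A)$, and hence $A\subseteq\mathfrak p$. Thus $\mathfrak p\in Z$ and its closure is $Z$, so $\mathfrak p$ is the generic point.

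\textbf{Main obstacle.} I expect the only delicate points to be the ideal-arithmetic inclusions, since the product of Tambara ideals is not levelwise. Each of these inclusions follows from the minimality in the definition of $IJ$, so I would verify them carefully first and then run the rest of the argument formally.
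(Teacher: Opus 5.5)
The paper gives no proof of this statement; it is quoted from \cite[Theorem A]{CMQSV:24}, so there is no in-paper argument to compare with. Your proof is correct and self-contained. It shows that quasi-compactness and sobriety follow formally from a few properties of the lattice of Tambara ideals:
\begin{itemize}
\item sums and intersections are computed levelwise;
\item the product satisfies $IJ\subseteq I\cap J$ and distributes over sums up to inclusion, both immediate from minimality;
\item the top ideal is compact, because $I=T$ if and only if $1\in I(G/e)$, via $\res^H_e$ and $\nm^K_e$.
\end{itemize}
What this buys is independence from the more specialized tools. You never use Nakaoka's $\Q$-criterion for primality, nor the fact \cite[Theorem 4.7]{CMQSV:24} that primes are levelwise radical. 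Primality enters only through its definition ($IJ\subseteq\mathfrak p$ implies $I\subseteq\mathfrak p$ or $J\subseteq\mathfrak p$), exactly as in the classical Zariski arguments. For the upgrade to spectrality, by contrast, the paper invokes \cref{thm: main} to make each $D(f)$ quasi-compact.

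One adjustment to your ``main obstacle'' assessment. The ideal-arithmetic inclusions are not delicate; each is a one-line minimality argument. The one place where Tambara-specific structure actually enters is that a levelwise sum of ideals is again closed under norms. This is nontrivial because norms are not additive; it uses Tambara reciprocity. The paper quotes it from Nakaoka for two ideals, but you need it for an arbitrary family, both in $\bigcap_\alpha V(I_\alpha)=V(\sum_\alpha I_\alpha)$ and when extracting a finite subfamily. You should state explicitly three things:
\begin{itemize}
\item the levelwise sum of an arbitrary family is the directed union of its finite subsums;
\item each finite subsum is a Tambara ideal, by induction on the binary case;
\item a directed union of Tambara ideals is a Tambara ideal.
\end{itemize}
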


We will have need of the following basis of open subsets.
\begin{definition}\label{defn: D(f)}
    For any $f\in T$, let $D(f) =\{\mf{p}\in \Spec(T)\mid f\notin \mf{p}\}$.
\end{definition}

The set $D(f)\subseteq \Spec(T)$ is open because it is the complement of the closed set $V(\langle f\rangle)$, where $\langle f\rangle$ denotes the smallest ideal of $T$ which contains $f$.

\begin{lemma}
    The collection of $D(f)$ is a basis for the topology on $\Spec(T).$
\end{lemma}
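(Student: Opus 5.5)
Recall that a collection of open sets is a basis exactly when it covers the space and, for each open set $U$ and each point $\mf{p} \in U$, there is some member $D(f)$ with $\mf{p} \in D(f) \subseteq U$. (This second condition also handles intersections of basic opens, since such intersections are open.) We already know each $D(f)$ is open, because it is the complement of $V(\langle f\rangle)$. So the plan is to check the neighborhood condition for an arbitrary open set.

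Let $U$ be open and $\mf{p} \in U$. By definition of the topology, $U = \Spec(T) \setminus V(I)$ for some Tambara ideal $I$. Since $\mf{p} \notin V(I)$, we have $I \not\subseteq \mf{p}$. Choose a subgroup $H \leq G$ and an element $f \in I(G/H)$ with $f \notin \mf{p}(G/H)$. Then $\mf{p} \in D(f)$ by \Cref{defn: D(f)}.

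It remains to show $D(f) \subseteq U$, or equivalently $V(I) \subseteq V(\langle f\rangle)$. Suppose $\mf{q}$ is a prime with $I \subseteq \mf{q}$. Then $f \in I(G/H) \subseteq \mf{q}(G/H)$. Because $\mf{q}$ is an ideal containing $f$, and $\langle f\rangle$ is the smallest ideal containing $f$, we get $\langle f\rangle \subseteq \mf{q}$. Thus $f \in \mf{q}$, so $\mf{q} \notin D(f)$. This proves $D(f) \cap V(I) = \varnothing$, i.e.\ $D(f) \subseteq U$.

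Covering is immediate: $D(1) = \Spec(T)$, since no prime contains $1$ (primes are proper, and an ideal containing $1$ at some level $G/H$ equals $T(G/H)$ there). Alternatively, covering follows from the neighborhood condition applied to $U = \Spec(T) = \Spec(T)\setminus V(0)$. No step here is a real obstacle; the only point needing care is that $f$ must be a single element at a single level $G/H$. This is fine, because $D(f)$ is defined for any element of any level.
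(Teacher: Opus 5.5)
Your argument is correct and is essentially the paper's proof. The paper observes that every open set $\Spec(T)\setminus V(I)$ equals $\bigcup_{f\in I}D(f)$, and your pointwise choice of $f\in I(G/H)\setminus\mf{p}(G/H)$ verifies exactly this.

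One slip in an aside you don't need: $\Spec(T)\setminus V(0)$ is empty, since every prime contains the zero ideal. The whole space is $\Spec(T)\setminus V(T)$, where $V(T)=\varnothing$ because primes are proper. Applying your neighborhood argument with $I=T$ gives the covering directly, without the check your $D(1)$ remark tacitly needs: that an ideal equal to $T(G/H)$ at one level is all of $T$.
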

\begin{proof}
    Every open set in $\Spec(T)$ has the form 
    \[
        D(I) := \Spec(T)\setminus V(I) = \{\mf{p}\in\Spec(T)\mid I\not\subseteq\mf{p}\}
    \]
    for some ideal $I$ of $T$.  We have $D(I) = \bigcup_{f\in I} D(f)$, hence every open set is the union of sets of the form $D(f)$.
\end{proof}

One readily checks that $D(f)\cup D(f') = D(\langle f,f'\rangle)$ for any $f$ and $f'$. We can also describe the intersections.
\begin{lemma}\label{lemma: intersection of Ds}
    For any two $e,f\in T$ we have that 
    \[
        D(e)\cap D(f) = \bigcup_{z} D(z)
    \]
    where $z$ runs over all generalized products of $e$ and $f$.
\end{lemma}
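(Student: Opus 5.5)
We prove the two inclusions separately, relying on Nakaoka's characterization of prime ideals through the statements $\Q(-,-,-)$ together with the fact that prime ideals are closed under the structure maps.

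For ``$\supseteq$'', let $z$ be a generalized product of $e$ and $f$. Then $z = a\cdot b$, where $a$ is a multiplicative translate of $e$ and $b$ is a multiplicative translate of $f$. Take $\mf{p}\in D(z)$, so $z\notin\mf{p}$. Since $\mf{p}$ is an ideal at each level, $z\notin\mf{p}$ forces $a\notin\mf{p}$ and $b\notin\mf{p}$. Now suppose $e\in\mf{p}$. Because $\mf{p}$ is closed under restriction, conjugation and norm, every multiplicative translate of $e$ lies in $\mf{p}$, and in particular $a\in\mf{p}$. This contradicts the previous sentence, so $e\notin\mf{p}$. The same argument gives $f\notin\mf{p}$. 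Hence $\mf{p}\in D(e)\cap D(f)$, and so $D(z)\subseteq D(e)\cap D(f)$.

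For ``$\subseteq$'', let $\mf{p}\in D(e)\cap D(f)$, so $e\notin\mf{p}$ and $f\notin\mf{p}$. Since $\mf{p}$ is prime, the contrapositive of Nakaoka's theorem shows that $\Q(e,f,\mf{p})$ fails. That is, some generalized product $z$ of $e$ and $f$ satisfies $z\notin\mf{p}$. Therefore $\mf{p}\in D(z)$, which lies in the union.

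There is no real obstacle here. The only point needing care is that an ideal of a Tambara functor is closed under all multiplicative translates. This holds because the definition of a Tambara ideal requires closure under $\res$, $\conj$ and $\nm$, and a multiplicative translate is a composite of exactly these operations.
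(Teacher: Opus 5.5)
Your proposal is correct and follows essentially the same route as the paper: the inclusion $\subseteq$ is the contrapositive of Nakaoka's criterion, since $\Q(e,f,\mf{p})$ must fail, and the inclusion $\supseteq$ uses that $z$ is divisible by a multiplicative translate of $e$ (resp.\ $f$), which lies in $\mf{p}$ whenever $e$ (resp.\ $f$) does. The only difference is that you spell out why $\mf{p}$ is closed under multiplicative translates, a step the paper leaves implicit.
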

\begin{proof}
    For the forward inclusion, suppose that $\mf{p}$ is a prime ideal which contains neither $e$ nor $f$.  Then the proposition $\Q(e,f,\mf{p})$ must be false, so there is some generalized product of $e$ and $f$, call it $z$, such that $z\notin \mf{p}$, hence $\mf{p}\in D(z)$.

    For the reverse inclusion it suffices, by symmetry, to show that $D(z)\subseteq D(e)$ for all $z$.  It suffices to show that if $\mf{p}$ contains $e$ then $\mf{p}$ contains $z$.  But this is clear, since $\mf{p}$ is levelwise an ideal and $z$ is divisible by a multiplicative translate of $e$.
\end{proof}
\begin{remark}\label{remark: finite union}
    Since, by \cref{remark: finitely many translates}, there are only finitely many multiplicative translates of either $e$ or $f$, there are only finitely many generalized products of $e$ and $f$.  Thus, the union on the right hand side of \cref{lemma: intersection of Ds} is a finite union.
\end{remark}

\section{Constructing prime ideals}\label{sec: constructing prime ideals}
In this section we prove a result, necessary for the proof of the main theorem, which allows us to construct examples of prime ideals in Tambara functors. We begin with some technical results regarding the statements $\Q(-,-,-)$.  The first lemma tells us that $\Q(-,-,-)$ behaves well with respect to sums.

\begin{lemma}\label{lemma: Q and sums}
    Suppose that $a\in T(G/H)$ and $b,c\in T(G/K)$. For any ideal $I \subseteq T$, if $\Q(a,b,I)$ and $\Q(a,c,I)$ are both true then $\Q(a,b+c,I)$ is true.
\end{lemma}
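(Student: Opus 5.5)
We have to show that every generalized product of $a$ and $b+c$ lies in $I$. Such a product has the form $u\cdot v$, where $u$ is a multiplicative translate of $a$ and
\[v = \nm^L_{gMg^{-1}}\conj_{g,M}\res^K_M(b+c).\]
Restriction and conjugation are ring homomorphisms, so $\conj_{g,M}\res^K_M(b+c) = b' + c'$, with $b'$ and $c'$ the corresponding images of $b$ and $c$. The difficulty is the norm, which is not additive. For this we use Tambara's formula for the norm of a sum, i.e. rule (4) (the exponential diagram) applied to $N_f\circ T_\nabla$. It expresses $\nm^L_{M'}(b'+c')$, with $M' = gMg^{-1}$, as a sum of transfers
\[\nm^L_{M'}(b'+c') = \sum_j \tr^L_{L_j}\bigl(z_j\bigr).\]
Each $z_j$ is a product of norms of conjugated restrictions of $b'$ and $c'$ to various subgroups. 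Two terms are the "pure" ones, $\nm^L_{M'}(b')$ and $\nm^L_{M'}(c')$. Every other term is $\tr^L_{L_j}(\beta_j\gamma_j)$, where $\beta_j$ is a nonempty product of multiplicative translates of $b'$ and $\gamma_j$ is a nonempty product of multiplicative translates of $c'$. By \cref{lem:MT of MT}, multiplicative translates of $b'$ are products of multiplicative translates of $b$, and likewise for $c$. In fact, in each summand all the factors can be collected into a product of multiplicative translates of $b$ times a product of multiplicative translates of $c$ (at least one of each being present in the mixed terms).

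Now multiply by $u$. For the pure terms, $u\cdot \nm^L_{M'}(b')$ is a generalized product of $a$ and $b$, because $\nm^L_{M'}(b')$ is itself a multiplicative translate of $b$. Hence it lies in $I$ by $\Q(a,b,I)$, and the same holds for $c$ by $\Q(a,c,I)$. For a mixed term, Frobenius reciprocity gives
\[u\cdot \tr^L_{L_j}(w) = \tr^L_{L_j}\bigl(\res^L_{L_j}(u)\, w\bigr).\]
Since $I$ is closed under transfers, it suffices to show $\res^L_{L_j}(u)\cdot w \in I(G/L_j)$. By \cref{lem:MT of MT}, $\res^L_{L_j}(u)$ is a nonempty product of multiplicative translates of $a$; pick one such factor $u'$. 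Also $w$ contains some factor $v'$ that is a multiplicative translate of $b$. Then $u' v'$ is a generalized product of $a$ and $b$, so $u'v' \in I(G/L_j)$ by $\Q(a,b,I)$. Since $I(G/L_j)$ is an ideal, the whole product $\res^L_{L_j}(u)\, w$ lies in it. Summing the terms, $uv \in I$, which proves $\Q(a,b+c,I)$.

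The main obstacle is stating the norm-of-sum formula precisely enough to justify the structural claim. Namely, we need that every summand other than the two pure norms is a transfer of a product containing at least one multiplicative translate (conjugated restriction followed by norm) of $b$ and one of $c$, with all of them landing at the same level $L_j$. I would extract this from the exponential diagram for $N_f\circ T_\nabla$. Here $\Pi_f$ of $X\amalg X\to X$ is the $G$-set of sections, i.e. functions from the fibres of $f$ to $\{b,c\}$. Each orbit of sections gives one transfer term, and the corresponding product is a norm of restrictions along the two preimage pieces. This is routine bookkeeping; alternatively, it can be cited from Tambara's original formula for $\nm(x+y)$.
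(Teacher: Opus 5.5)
Your proposal is correct and follows essentially the same route as the paper: you expand the norm of $b'+c'$ via Tambara reciprocity into the two pure norms plus transfers of mixed products, dispose of the pure terms with $\Q(a,b,I)$ and $\Q(a,c,I)$, and handle the mixed terms by Frobenius reciprocity together with \cref{lem:MT of MT} and $\Q(a,b,I)$ alone. Your bookkeeping is, if anything, slightly more careful than the paper's write-up, since you let the mixed factors be nonempty products of multiplicative translates rather than single translates.
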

\begin{proof}
    Let $z\in T(G/L)$ be a multiplicative translate of $b+c$. By Tambara reciprocity \cite[Theorem 2.4]{mazur:2019}, we can write $z$ in the form
    \begin{align*}
        z &= \nm^{J}_{gLg^{-1}} \conj_{g,L}\res^{K}_{L}(b+c) \\
        & = \nm^{J}_{gLg^{-1}} \left( \conj_{g,L}\res^K_{L}(b)+\conj_{g,L}\res^K_{L}(c)\right)\\
        &=\nm^{J}_{gLg^{-1}}\conj_{g,L}\res^K_{L}(b)+\nm^{J}_{gLg^{-1}}\conj_{g,L}\res^K_{L}(c)+\sum_{i=1}^n \tr^{L}_{B_i}(x_iy_i)\\
        & = b'+c' + \sum_{i=1}^n \tr^{L}_{B_i}(x_iy_i)
    \end{align*}
    for some subgroups $B_1, \dots, B_n \leq L$, where $b',x_1,\dots,x_n$ are multiplicative translates of $b$ and $c',y_1,\dots,y_n$ are multiplicative translates of $c$.

    To prove $\Q(a,b+c,I)$, we need to show that if $a'\in T(G/L)$ is any multiplicative translate of $a$ then $a'z\in I(G/L)$. We have 
    \[
        a'z' = a'b'+a'c'+\sum_{i=1}^n a'\tr^L_{B_i}(x_iy_i).
    \]
    We have $a'b'\in I(G/L)$ and $a'c'\in I(G/L)$ since $\Q(a,b,I)$ and $\Q(a,c,I)$ are true by assumption. By Frobenius reciprocity, we have 
    \[
        a'\tr^L_{B_i}(x_i,y_i) = \tr^L_{B_i}(\res^L_{B_i}(a')\cdot x_iy_i)
    \]
    and so it suffices to show that $\res^L_{B_i}(a)\cdot x_iy_i\in I(G/B_i)$.  By \Cref{lem:MT of MT}, $\res^L_{B_i}(a')$ is divisible by a multiplicative translate of $a$. Since $\Q(a,b,I)$ holds and $I(G/B_i)$ is an ideal, we have $\res^L_{B_i}(a) \cdot x_iy_i \in I(G/B_i)$.
\end{proof}

Next, we observe that $\Q(-,-,-)$ behaves well with transfers.  We note that this next lemma is a special case of \cite[Lemma 5.4]{CMQSV:24}; we provide a proof here to fill in details which are omitted in \textit{loc.\ cit}.

\begin{lemma}\label{lemma: Q and transfers}
    Suppose that $a\in T(G/H)$ and $b\in T(G/L)$. For any ideal $I \subseteq T$, if $\Q(a,b,I)$ is true then $\Q(a,\tr^K_L(b),I)$ is true for any $K \geq L$.
\end{lemma}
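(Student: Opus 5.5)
The plan is to check directly that every generalized product of $a$ and $\tr^K_L(b)$ lies in $I$. Such a product has the form $a'z$, where $a'$ is a multiplicative translate of $a$ at some level $G/M$ and $z$ is a multiplicative translate of $\tr^K_L(b)$ at the same level. So $z = \nm^{M}_{gJg^{-1}}\conj_{g,J}\res^K_J(\tr^K_L(b))$ for some $J \leq K$ and $g\in G$. I would simplify $z$ in three stages: the restriction, then the conjugation, then the norm.

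First, the restriction. The double coset formula gives
\[\res^K_J\tr^K_L(b) = \sum_{h \in J\backslash K/L} \tr^J_{J\cap hLh^{-1}}\conj_{h}\res^L_{h^{-1}Jh\cap L}(b).\]
Each summand is a transfer of a multiplicative translate of $b$. Conjugation by $g$ commutes with transfers and preserves the class of multiplicative translates, so $\conj_{g,J}\res^K_J\tr^K_L(b) = \sum_j \tr^{gJg^{-1}}_{C_j}(b_j)$, where each $b_j$ is a multiplicative translate of $b$.

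Second, the norm. Applying $\nm^M_{gJg^{-1}}$ to a sum of transfers, Tambara reciprocity (the formula for the norm of a sum together with the exponential-diagram formula for the norm of a transfer, \cite[Theorem 2.4]{mazur:2019}) writes $z$ as a sum of terms $\tr^M_{B}(w)$. Here each $w$ is a product of elements of the form $\nm^{B}_{D}\conj_{k}\res(b_j)$, and each product has at least one factor. By \Cref{lem:MT of MT}, every such $w$ is a nonempty product of multiplicative translates of $b$, so in particular $w$ is divisible by one multiplicative translate $b''$ of $b$. Since $\tr^M_M$ is the identity, this covers the summands with no genuine transfer as well.

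Now multiply by $a'$ and use Frobenius reciprocity: $a'\tr^M_B(w) = \tr^M_B(\res^M_B(a')\,w)$. By \Cref{lem:MT of MT}, $\res^M_B(a')$ is divisible by a multiplicative translate $a''$ of $a$. Hence $\res^M_B(a')\,w$ is a multiple of the generalized product $a''b''$, which lies in $I(G/B)$ by $\Q(a,b,I)$. Since $I$ is closed under transfers and $I(G/M)$ is additively closed, $a'z \in I(G/M)$.

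I expect the main obstacle to be the norm-of-transfer-sums step. There I must make sure that every term produced by the exponential formula really contains at least one nonempty normed factor coming from $b$, and no empty product equal to $1$. This should hold because each piece of the dependent product maps onto a nonempty fiber. If this step gets messy, a cleaner alternative is to treat one summand $\tr(b_j)$ at a time and then combine the summands with \Cref{lemma: Q and sums}. That reduces the problem to showing $\Q(a, \tr^{J'}_{C}(b_j), I)$ for a single transfer, followed by a single norm of a transfer.
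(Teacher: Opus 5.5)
Your proof is correct and is essentially the paper's. The paper writes a multiplicative translate of $\tr^K_L(b)$ as $\sum_i \tr^{L'}_{B_i}(b_i)$, with each $b_i$ a nonempty product of multiplicative translates of $b$, in one step from the bispan normal form $T_t \circ N_n \circ R_r$ in $\mathcal{P}_G$ (whose middle map $n$ is surjective, which is exactly your nonemptiness point), whereas you compute the same composite in stages via the double coset formula and the exponential formula, and both arguments then finish with Frobenius reciprocity and \Cref{lem:MT of MT}. Your fallback via \Cref{lemma: Q and sums} would not help on its own, since $\Q(a,\tr^{J'}_{C}(b_j),I)$ is again an instance of the lemma being proved, but your main route does not need it.
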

\begin{proof}
    Let $z = \tr^K_L(b)$.  By the definition of composition in the category $\mathcal{P}_G$ we can write any multiplicative translate $z'$ of $z$ as $\tr_t \nm_n \res_r(b)$ for some maps of finite $G$-sets $t$, $n$ and $r$ fitting into a bispan
    \[
        G/L\xleftarrow{r} X\xrightarrow{n} Y\xrightarrow{t} G/L'
    \]
    where $n : X \to Y$ is surjective. Without loss of generality, we may assume $Y = \coprod_{i=1}^m G/B_i$ for some subgroups $B_1, \dots, B_n \leq L'$ and that each component $t_i : G/B_i \to G/L'$ of $t$ is the canonical projection $xB_i \mapsto xL'$.  If we write $X_i = n^{-1}(G/B_i)$ and pick orbit decompositions $X_i = \coprod\limits_{j=1}^{\ell_i} G/A_{i,j}$ then the bispan is isomorphic to
    \[
        G/L \xleftarrow{(r_{i,j})_{i,j}} \coprod_{i=1}^m\coprod_{j=1}^{\ell_i} G/A_{i,j} \xrightarrow{\amalg_i n_i} \coprod_{i=1}^m G/B_i\xrightarrow{(t_i)_i} G/L'
    \]
    where $n_i$ and $r_{i,j}$ are the appropriate restrictions of $n$ and $r$, respectively. Since each orbit of $X$ surjects onto $G/L$ via $r$, the subgroups $A_{i,j}$ can be chosen so that $A_{i,j} \leq L$.
    Now for each pair $(i,j)$ we write $n_{i}(eA_{i,j}) = g_{i,j}B_i$, and note that $g_{i,j}^{-1}A_{i,j}g_{i,j}\subseteq B_i$.
    
    With these choices, we have
    \[
        \tr_{t_i} = \tr_{B_i}^{L'}, \quad \nm_{n_{i}} = \nm_{g_{i,j}^{-1}A_{i,j}g_{i,j}}^{B_i} \circ \conj_{g_{i,j}^{-1}},\quad \res_{r_{i,j}} = \res^{L}_{A_{i,j}}
    \]
    so that 
    \[
       z'= \tr_t \nm_n \res_r(b) = \sum_{i=1}^m \tr^{L'}_{B_i} \left(\prod\limits_{j=1}^{\ell_i}\nm_{g_{i,j}^{-1}A_{i,j}g_{i,j}}^{B_i} \circ \conj_{g_{i,j}^{-1}} \circ \res^{L}_{A_{i,j}}(b)\right).
    \]
    Let us simplify by writing 
    \[
        b_{i}=\prod\limits_{j=1}^{\ell_i}\nm_{g_{i,j}^{-1}A_{i,j}g_{i,j}}^{B_i} \circ \conj_{g_{i,j}^{-1}} \circ \res^{L}_{A_{i,j}}(b),
    \]
so that the sum is $z' = \sum_{i=1}^m \tr^{L'}_{B_i}(b_i)$.
Now, if $a'\in T(G/L')$ is any multiplicative translate of $a$ then
    \[
        a'z' = a'\cdot \sum_{i=1}^m \tr^{L'}_{B_i}(b_i) = \sum_{i=1}^m \tr^{L'}_{B_i}( \res^{L'}_{B_i}(a')\cdot b_i)
    \]
    where the second equality is Frobenius reciprocity. Since $\res^{L'}_{B_i}(a')$ is a product of multiplicative translates of $a$ (\Cref{lem:MT of MT}) and $b_i$ is a product of multiplicative translates of $b$, we must have $\res^{L'}_{B_i}(a')\cdot b_i\in \mf{p}(G/L')$ in light of $\Q(a,b,\mf{p})$. Since $\mf{p}$ is an ideal, we have $a'z' \in \mf{p}(G/L')$, proving that $\Q(a,z,\mf{p})$ is true.
\end{proof}

As above, we write $\langle a\rangle$ for the smallest ideal of $T$ which contains $a$.  Nakaoka proved the following theorem.

\begin{theorem}[{\cite[Proposition 3.4]{nakaoka:2012}}]\label{lemma: ideal generated by a}
    The ideal $\langle y\rangle(G/K)$ consists of all elements which can be written as a sum of elements of the form $\tr^K_L(xy')$ where $y'\in T(G/L)$ is a multiplicative translate of $y$ and $x \in T(G/L)$ is arbitrary.
\end{theorem}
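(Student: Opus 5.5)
The plan is to define, for each subgroup $K \leq G$, the set $J(G/K)$ of all finite sums of elements of the form $\tr^K_L(xy')$, where $L \leq K$, $y' \in T(G/L)$ is a multiplicative translate of $y$, and $x \in T(G/L)$ is arbitrary. We then show $J = \langle y\rangle$. The easy inclusion is $J \subseteq \langle y\rangle$. Any Tambara ideal containing $y$ is closed under restriction, conjugation and norm, so it contains every multiplicative translate $y'$. Being levelwise an ideal, it contains $xy'$, and being closed under transfer, it contains $\tr^K_L(xy')$. Sums stay inside because each level is an ideal. Also $y \in J(G/H)$, since $y$ is a multiplicative translate of itself (take $K = H$, $g = e$, $L = H$) and $\tr^H_H = \id$. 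It therefore suffices to show that $J$ is a Tambara ideal.

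The first checks are that each $J(G/K)$ is an ideal and that $J$ is closed under conjugation and transfer. Closure under addition holds by definition. For $a \in T(G/K)$, Frobenius reciprocity gives $a\,\tr^K_L(xy') = \tr^K_L(\res^K_L(a)\,x\,y')$, which again has the required form. Transfers compose, $\tr^{K'}_K\tr^K_L = \tr^{K'}_L$, and conjugation commutes with transfers and products. A conjugate of a multiplicative translate is again a multiplicative translate, since conjugation can be absorbed into the $\conj_g$ and the norm.

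Restriction comes next. By the double coset formula, $\res^K_M \tr^K_L(xy')$ is a sum of terms $\tr^M_{M\cap gLg^{-1}}\conj_g\res^L_{g^{-1}Mg\cap L}(xy')$. Restriction and conjugation are ring maps, so each term equals $\tr(\,x''\cdot \conj_g\res(y')\,)$. By \Cref{lem:MT of MT}, $\conj_g\res(y')$ is a product of multiplicative translates of $y$. This product is nonempty, so one factor can be kept as the new $y''$ and the rest absorbed into the arbitrary coefficient.

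The main obstacle is closure under norms. Norms are not additive, and we must show $\nm^{K'}_K(\sum_i \tr^K_{L_i}(x_iy_i')) \in J(G/K')$. I would use Tambara reciprocity, in the form cited before \Cref{lemma: Q and sums}, or equivalently the exponential-diagram rule (4) in $\mathcal{P}_G$. Applied to $N_f\circ T_g$ with $g\colon \coprod_i G/L_i \to G/K$, it writes the norm as $\tr_t\nm_n\res_r$ of the tuple $(x_iy_i')_i$, for some bispan whose middle map $n$ is surjective. Decompose $Y$ into orbits $G/B$ exactly as in the proof of \Cref{lemma: Q and transfers}. Each summand becomes $\tr^{K'}_{B}$ of a product, over a nonempty fiber, of elements $\nm\,\conj\,\res(x_iy_i')$. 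Each such factor equals $\nm\conj\res(x_i)\cdot\nm\conj\res(y_i')$, because norms, conjugations and restrictions are multiplicative. Here $\nm\conj\res(y_i')$ is a product of multiplicative translates of $y$ by \Cref{lem:MT of MT} together with multiplicativity of norms. Surjectivity of $n$ guarantees at least one such factor in each summand, so each summand lies in $J(G/K')$. The point to handle carefully is that the fibers of $n$ are nonempty; if this fails, some term would be a norm of an empty product, namely $1$, with no $y$-factor. This is exactly why the nonempty-fiber property of the exponential diagram is the crux of the argument.
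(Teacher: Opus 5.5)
Your proof is correct. The paper does not prove this statement itself: it simply quotes Nakaoka \cite[Proposition 3.4]{nakaoka:2012}. So there is no internal argument to match, and yours is a self-contained replacement.

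It uses only tools the paper already relies on:
\begin{itemize}
\item Frobenius reciprocity and composition of transfers, for the levelwise-ideal and transfer checks.
\item The double coset formula together with \cref{lem:MT of MT}, for restrictions.
\item The exponential-diagram rule (4), for norms.
\end{itemize}

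Your norm step is the same bispan computation the paper carries out in the proof of \cref{lemma: Q and transfers}. The only change is that the source is now $X=\coprod_i G/L_i$ rather than a single orbit. This causes no trouble: each orbit $G/A$ of the middle $G$-set maps into a single $G/L_i$, so the restriction can still be normalized to $\res^{L_i}_{A}$ with $A\le L_i$.

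You call the surjectivity of the middle map the crux but do not justify it, and it takes one line. The middle map $f'$ is the pullback of the surjection $f\colon G/K\to G/K'$ along $\Pi_f g$. Hence each fiber of $f'$ is in bijection with a fiber of $f$, and so is nonempty.

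As for what each approach buys: citing Nakaoka keeps the paper short. Your argument makes the paper self-contained and shows why the description is exactly right. A norm of a sum of terms $\tr^K_{L_i}(x_iy_i')$ expands into transfers of products over nonempty fibers, and every factor of those products carries a multiplicative translate of $y$.
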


\begin{proposition}\label{prop: Q and translates}
    Let $I \subseteq T$ be an ideal and suppose that $\Q(a,b,I)$ is true.  Then for any $z\in \langle b\rangle$ the statement $\Q(a,z,I)$ is also true.
\end{proposition}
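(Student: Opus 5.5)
By \cref{lemma: ideal generated by a}, every element of $\langle b\rangle(G/K)$ is a finite sum of elements of the form $\tr^K_L(x b')$, where $b'\in T(G/L)$ is a multiplicative translate of $b$ and $x\in T(G/L)$ is arbitrary. \Cref{lemma: Q and sums} shows that $\Q(a,-,I)$ is closed under sums, and \cref{lemma: Q and transfers} shows it is closed under transfers. So the plan reduces to one claim: $\Q(a, xb', I)$ holds for every multiplicative translate $b'$ of $b$ and every $x$.

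To prove this claim I take an arbitrary multiplicative translate $w$ of $xb'$ and an arbitrary multiplicative translate $a'$ of $a$ at the same level. Since $\res$ and $\conj_g$ are ring homomorphisms and $\nm$ is multiplicative (\cref{prop: basic facts about TNR}), we get
\[
w=\nm^{J}_{gMg^{-1}}\conj_{g,M}\res^L_M(x)\cdot \nm^{J}_{gMg^{-1}}\conj_{g,M}\res^L_M(b').
\]
The second factor is a multiplicative translate of $b'$. By \cref{lem:MT of MT} it is therefore a non-empty product $\prod_i b_i$ of multiplicative translates $b_i$ of $b$. In particular it is divisible by a single multiplicative translate $b_1$ of $b$. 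The hypothesis $\Q(a,b,I)$ gives $a'b_1\in I$, and since $I$ is levelwise an ideal, $a'w\in I$. This proves $\Q(a,xb',I)$.

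For an arbitrary sum $\sum_k \tr^K_{L_k}(x_k b'_k)$, each summand then satisfies $\Q(a,-,I)$ by \cref{lemma: Q and transfers}. Induction on the number of summands with \cref{lemma: Q and sums} gives $\Q(a,z,I)$. The zero element (the empty sum) is handled trivially, since every multiplicative translate of $0$ is $0$.

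The only step needing care is the multiplicative step: a translate of the product $xb'$ must split as a translate of $x$ times a translate of $b'$. Because we only restrict, conjugate and norm (no transfer is involved), multiplicativity of each operation suffices, and the translate of $b'$ is handled by \cref{lem:MT of MT}. I expect no real obstacle beyond checking that the indices $J$, $M$, $g$ line up with \cref{definition: multiplicative translates}.
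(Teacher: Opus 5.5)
Your proof is correct and follows essentially the same argument as the paper. Both reduce via \cref{lemma: ideal generated by a}, \cref{lemma: Q and sums}, and \cref{lemma: Q and transfers} to the case $z = xb'$, and both then invoke \cref{lem:MT of MT} to see that every multiplicative translate of $xb'$ is divisible by a multiplicative translate of $b$. Your explicit splitting of the translate into a translate of $x$ times a translate of $b'$, and your remark about the empty sum, simply spell out details the paper leaves implicit.
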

\begin{proof}
    By \cref{lemma: Q and sums,lemma: ideal generated by a} it suffices to prove the claim when $z = \tr^K_L(xb')$ where $b'$ is a multiplicative translate of $b$. By \cref{lemma: Q and transfers}, it suffices to consider the case $z = xb'$. In this case, by \Cref{lem:MT of MT}, any multiplicative translate $z'$ of $z$ is divisible by a multiplicative translate of $b$. Thus, for $a'$ any multiplicative translate of $a$ we have $a'z' \in I$ by $\Q(a,b,I)$ and the fact that $I$ is an ideal.\qedhere
\end{proof}

We now prove the main result of this section, which guarantees the existence of prime ideals which avoid certain multiplicative subsets.

\begin{proposition}\label{prop: prime ideal machine}
    Let $T$ be a Tambara functor. If $S \subseteq T(G/H)$ is a multiplicatively closed subset which does not contain $0$ then there is at least one ideal $\mf{p}$ of $T$ which is maximal among ideals disjoint from $S$. Moreover, if $S$ is nonempty, then $\mf{p}$ is a prime ideal.
\end{proposition}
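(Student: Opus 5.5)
The plan is to get existence from Zorn's lemma and primality from the usual commutative-algebra argument, with the statements $\Q(-,-,-)$ doing the work of products.

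\textbf{Existence.} Consider the set $\Sigma$ of Tambara ideals $I \subseteq T$ with $I(G/H) \cap S = \varnothing$, ordered by inclusion. It is nonempty: the zero ideal lies in $\Sigma$ because $0 \notin S$. Given a chain $\{I_\alpha\}$ in $\Sigma$, form the levelwise union $I(G/K) = \bigcup_\alpha I_\alpha(G/K)$. Each $I(G/K)$ is an ideal of $T(G/K)$, being a union of a chain of ideals. The collection is closed under $\tr$, $\nm$, $\res$ and $\conj$, since each of these maps involves a single element, and that element already lies in some $I_\alpha$. Also $I(G/H) \cap S = \varnothing$. So every chain has an upper bound in $\Sigma$, and Zorn's lemma gives a maximal element $\mf{p}$.

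\textbf{Primality, setup.} Now assume $S \neq \varnothing$. Then $\mf{p}$ is proper: if $\mf{p} = T$, then $\mf{p}(G/H) = T(G/H)$ would meet $S$. By Nakaoka's criterion, it suffices to show that $\Q(x,y,\mf{p})$ forces $x \in \mf{p}$ or $y \in \mf{p}$. Suppose instead that $\Q(x,y,\mf{p})$ holds but $x, y \notin \mf{p}$. Then $\mf{p} + \langle x\rangle$ and $\mf{p} + \langle y\rangle$ strictly contain $\mf{p}$. By maximality both meet $S$ at level $G/H$, so (sums of ideals being levelwise) we can write
\[
s = u + z, \qquad s' = u' + w,
\]
with $s, s' \in S$, $u, u' \in \mf{p}(G/H)$, $z \in \langle x\rangle(G/H)$ and $w \in \langle y\rangle(G/H)$. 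Then $ss' \in S$, and
\[
ss' = uu' + uw + zu' + zw.
\]
The first three terms lie in $\mf{p}(G/H)$, so it remains to show $zw \in \mf{p}(G/H)$.

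\textbf{The key step: $zw \in \mf{p}(G/H)$.} This is where the earlier results come in.
\begin{enumerate}
    \item Apply \cref{prop: Q and translates} to $\Q(x,y,\mf{p})$ with $w \in \langle y\rangle$. This gives $\Q(x,w,\mf{p})$.
    \item The proposition $\Q$ is symmetric in its first two arguments, because a generalized product of $x$ and $w$ is the same thing as one of $w$ and $x$. So $\Q(w,x,\mf{p})$ holds.
    \item Apply \cref{prop: Q and translates} again with $z \in \langle x\rangle$. This gives $\Q(w,z,\mf{p})$.
    \item Each element is a multiplicative translate of itself: take $K = L = H$ and $g = e$. Hence $zw$ is itself a generalized product of $w$ and $z$, and $\Q(w,z,\mf{p})$ gives $zw \in \mf{p}(G/H)$.
\end{enumerate}
It follows that $ss' \in \mf{p}(G/H) \cap S$, contradicting $\mf{p} \in \Sigma$. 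So $\mf{p}$ is prime.

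The only nonformal step is this passage from $\Q(x,y,\mf{p})$ to $zw \in \mf{p}$. It needs \cref{prop: Q and translates} applied twice, once in each variable, using the symmetry of $\Q$; everything else is the classical argument.
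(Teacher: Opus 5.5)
Your proof is correct and follows the paper's argument: Zorn's lemma gives existence, and then the classical contradiction argument via Nakaoka's criterion shows that the product of the elements of $\langle x\rangle(G/H)$ and $\langle y\rangle(G/H)$ lies in $\mf{p}(G/H)$. Your explicit two-fold application of \cref{prop: Q and translates}, once in each variable using the symmetry of $\Q$, spells out a step that the paper compresses into a single citation.
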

\begin{proof}
    The collection of ideals disjoint from $S$ is non-empty, as it contains the zero ideal. By Zorn's lemma, let there is an ideal $\mf{p}$ which is maximal with respect to the property that $\mf{p}(G/H)\cap S=\varnothing$ (note that the union of a chain of Tambara ideals is a Tambara ideal).
    
    Now assuming $S \neq \varnothing$, it remains to be shown that $\mf{p}$ is prime. Since $S$ is nonempty, we immediately find that $\mf{p}$ is a proper ideal of $T$. Next, toward a contradiction, suppose there exist $y\in T(G/K)$ and $z\in T(G/L)$ such that $\Q(y,z,\mf{p})$ is true, but neither $y$ nor $z$ is in $\mf{p}$. Then $\mf{p} + \langle y \rangle$ and $\mf{p} + \langle z \rangle$ are ideals strictly larger than $\mf{p}$, so they intersect $S$. Thus, there exist $p_1,p_2\in \mf{p}(G/H)$ and $y',z'\in T(G/H)$, with $y'\in \langle y\rangle$ and $z'\in \langle z\rangle$ respectively, such that 
    \begin{align*}
        s_1 & = p_1+y'\\
        s_2 & = p_2+z'
    \end{align*}
    for some $s_1,s_2\in S$.  Multiplying the two equations we have 
    \[
        s_1s_2= p_3+y'z'
    \]
    where $p_3\in \mf{p}(G/H)$.  Now, by \cref{prop: Q and translates}, we have that $Q(y,z,\mf{p})$ implies that $\Q(y',z',\mf{p})$ is true and hence $y'z'\in \mf{p}$. But then $s_1s_2\in \mf{p}(G/H)\cap S$, which is a contradiction.
\end{proof}

\section{Nilpotence in Tambara functors}\label{sec: nilpotents}

Let $R$ be a commutative ring, and let $x \in R$. It is a key feature of commutative algebra that the following are equivalent:
\begin{enumerate}
    \item $x$ is nilpotent, i.e. $x^n = 0$ for some natural number $n$;
    \item $x$ lies in every prime ideal of $R$;
    \item $R[1/x] = 0$.
\end{enumerate}

In equivariant commutative algebra, one would hope these conditions remain equivalent, thus furnishing a definition of ``nilpotent'' in this setting. Unfortunately this is not the case -- condition (3) is generally weaker.

For example, consider the element \[t - p = [C_p/e] - p [C_p/C_p]\] in $\uA(C_p/C_p)$ where $\uA$ is the Burnside $C_p$-Tambara functor. Since $0$ is a prime ideal of $\uA$, $t-p$ does not lie in every prime ideal of $\uA$. On the other hand, $\res^{C_p}_e(t-p) = 0$, so $\uA[1/(t-p)] = 0$.  

In the remainder of this section we show that conditions (1) and (2) are equivalent and use this to study radicals of ideals, in general.  We will say more about condition (3) in \cref{sec: kilradical}.

\begin{definition}
    For a Tambara functor $T$ we write $\nil(T)$ for the intersection of all prime Tambara ideals.
\end{definition}

\begin{definition}
    The \emph{levelwise nilradical} of a $G$-Tambara functor is the ideal $N(T)$ such that $N(T)(G/H)$ is the nilradical of $T(G/H)$.
\end{definition}

It is not, a priori, obvious that $N(T)$ is actually an ideal.  We show in \cref{thm: main} below that $N(T) = \nil(T)$, and thus $N(T)$ is an ideal.  

\begin{lemma}\label{lemma: N is in nil}
    There is an inclusion $N(T)\subseteq \nil(T)$.
\end{lemma}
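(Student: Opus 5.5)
We must show that if $x\in T(G/H)$ is nilpotent, say $x^n=0$, then $x\in\mf{p}(G/H)$ for every prime Tambara ideal $\mf{p}$. The plan is to reduce, by induction on $n$, to the case $x^2=0$, and then to apply Nakaoka's criterion in the form of the statement $\Q(x,x,\mf{p})$. For the reduction, write $y=x^{n-1}$ when $n\ge 2$ (for $n=1$ there is nothing to prove). Then $y^2=x^{2n-2}=0$ because $2n-2\geq n$. Once we know square-zero elements lie in $\mf{p}$, we get $x^{n-1}\in\mf{p}$, and induction on the exponent gives $x\in\mf{p}$. The induction hypothesis here is ``every element with $x^{m}\in\mf{p}$ for some $m<n$ lies in $\mf{p}$.'' So it is cleaner to prove the slightly stronger claim: if $x^m\in\mf{p}(G/H)$ for some $m\ge 1$, then $x\in\mf{p}(G/H)$. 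This is levelwise radical-closure of primes, and the reduction step uses the square case applied to the quotient.

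For the square case, pass to the quotient $T/\mf{p}$, whose zero ideal is prime. Indeed, primes of $T/\mf{p}$ correspond to primes of $T$ containing $\mf{p}$, since $\Q$ and multiplicative translates are compatible with quotient maps. So it suffices to show: if $0$ is a prime ideal of $T$ and $x^2=0$ with $x\in T(G/H)$, then $x=0$. By Nakaoka's criterion it is enough to verify $\Q(x,x,0)$, i.e.\ that every product $x'x''$ of two multiplicative translates of $x$ vanishes. Write $x'=\nm^L_{gKg^{-1}}\conj_g\res^H_K(x)$ and $x''=\nm^{L}_{hK'h^{-1}}\conj_h\res^H_{K'}(x)$.

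This vanishing is the main obstacle. Squares of the same translate are fine: $(x')^2=\nm\conj\res(x^2)=0$ by multiplicativity of norm, conjugation and restriction. Products of \emph{different} translates need not obviously vanish, however, since $xy$ is not known to be zero for distinct translates. My first attempt will be to exploit primeness of $0$ via the levelwise criterion rather than directly. Let $w=x'x''$. Then $w^2=(x')^2(x'')^2=0$, so $w$ is again square-zero. This alone is circular, so instead I would consider the product $P$ of \emph{all} multiplicative translates of $x$ living at each level; there are finitely many by \cref{remark: finitely many translates}. The key claim is that for a suitable large exponent $N$, every generalized product of $x^{N}$-type elements is a product of translates in which some translate appears at least twice, and hence vanishes. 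Concretely, I would replace the pair $(x,x)$ by the pair $(s,s)$, where $s$ is the product of all multiplicative translates of $x$ landing at a given level, pushed down to $T(G/e)$ via restriction. By \cref{lem:MT of MT}, any multiplicative translate of $s$ is a product of translates of $x$ that contains each translate at least once. So a product of two translates of $s$ contains some translate of $x$ squared, which is $0$. Thus $\Q(s,s,0)$ holds, and primeness gives $s=0$.

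Finally I must get from $s=0$ back to $x=0$. Here I would run a downward induction that strips off one translate at a time. Write $s=x\cdot r$ with $r$ the product of the remaining translates. Apply the same argument to the pair $(x,r)$: every generalized product of $x$ and $r$ is, by \cref{lem:MT of MT}, a product containing every translate of $x$ with one factor repeated, hence $0$. So $\Q(x,r,0)$ holds, and primeness gives $x=0$ or $r=0$. If $r=0$, iterate with fewer factors, until only $x$ remains. Verifying that ``contains all translates'' survives taking translates, via the double coset formula, is the delicate bookkeeping step I expect to require the most care.
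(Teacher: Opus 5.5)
Your reduction to the case where $0$ is prime and $x^2=0$ is fine, and you are right that $\Q(x,x,0)$ does not follow from $x^2=0$ alone. The argument breaks in the passage through $s$.

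First, it is not true that every multiplicative translate of $s$ contains every translate of $x$. For $s\in T(G/e)$ the only translate at level $L$ is $\nm^L_e\conj_h(s)$, and this involves only translates of $x$ of the form $\nm^L_e\conj_g\res^H_e(x)$. The conclusion $s=0$ does survive if you take $s=\prod_{g\in G}\conj_g\res^H_e(x)$, because $s$ is Weyl-fixed and $\nm^L_e(s)^2=\nm^L_e(s^2)=0$.

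The fatal problem is the last step. $s$ lives in $T(G/e)$ while $x$ lives in $T(G/H)$, so ``$s=x\cdot r$'' only makes sense when $H=e$. Peeling factors at level $e$ can at best give $\res^H_e(x)=0$, and that does not force $x=0$ even when $0$ is prime: the element $[C_p/e]-p\in\uA(C_p/C_p)$ of \cref{sec: nilpotents} has zero restriction. An argument confined to level $e$ throws away exactly the data you need, namely the translates of $x$ at levels above $e$.

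Even at level $e$ the iteration is not sound as written. One generalized product of $x$ and $r$ is $s$ itself, which has no repeated factor; it vanishes only because $s=0$. Worse, after a couple of steps, knowing that one particular product $r_i$ and its conjugates vanish does not give the vanishing of all the products of a conjugate of $x$ with a conjugate of $r_{i+1}$ that $\Q(x,r_{i+1},0)$ requires.

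The fix is to run a descending induction on the length of products, over all levels at once and over \emph{all} products of a given length.
\begin{enumerate}
\item Let $\mathcal{M}$ be the finite set of multiplicative translates of $x$. Each $z\in\mathcal{M}$ satisfies $z^2=0$.
\item Let $P_k$ say that every product of $k$ elements of $\mathcal{M}$ lying at a common level is zero. $P_k$ holds for $k>\lvert\mathcal{M}\rvert$ by pigeonhole.
\item Suppose $P_k$ holds and $w$ is any product of $k-1$ elements of $\mathcal{M}$. By \cref{lem:MT of MT}, every translate of $w$ is a product of at least $k-1$ elements of $\mathcal{M}$. So every generalized product of $x$ and $w$ vanishes, i.e.\ $\Q(x,w,0)$ holds, and primeness gives $x=0$ or $w=0$.
\item Hence either $x=0$ or $P_{k-1}$ holds. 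Descending to $P_1$, which includes $x$ itself, gives $x=0$.
\end{enumerate}
The same argument works directly with $\mf{p}$ in place of $0$ and $x^n$ in place of $x^2$, so neither the quotient nor the square-zero reduction is needed. Alternatively, combine $\langle x\rangle^n=0$ from \cref{thm: nakaoka radical} with the ideal-theoretic definition of primeness.

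For comparison, the paper does not argue this by hand at all. It cites \cite[Theorem 4.7]{CMQSV:24}, which says every prime Tambara ideal is levelwise radical.
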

\begin{proof}
    This is immediate from \cite[Theorem 4.7]{CMQSV:24} which says that every Tambara prime ideal is levelwise radical.
\end{proof}

We now show that that actually $N(T) = \nil(T)$.  The technical heart of the proof is really \cref{prop: prime ideal machine}, proven in the last section.

\begin{theorem}\label{thm: main}
    For any $G$-Tambara functor we have $\nil(T)=N(T)$.
\end{theorem}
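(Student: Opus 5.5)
By \cref{lemma: N is in nil} we already have $N(T)\subseteq\nil(T)$. It remains to prove the reverse inclusion $\nil(T)\subseteq N(T)$.

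We argue by contrapositive. Fix $H\leq G$ and an element $x\in T(G/H)$ that is not nilpotent in the ring $T(G/H)$. We want a prime Tambara ideal $\mf{p}$ with $x\notin\mf{p}(G/H)$. Let
\[S=\{x^n : n\geq 0\}\subseteq T(G/H).\]
This set is nonempty, since it contains $1=x^0$, and it is multiplicatively closed. It does not contain $0$ precisely because $x$ is not nilpotent.

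\Cref{prop: prime ideal machine} applies directly to $S$. It produces an ideal $\mf{p}$ of $T$ that is maximal among ideals whose $G/H$ level is disjoint from $S$, and because $S\neq\varnothing$, this $\mf{p}$ is prime. Since $x\in S$, we get $x\notin\mf{p}(G/H)$, so $x\notin\nil(T)(G/H)$. This proves $\nil(T)(G/H)\subseteq N(T)(G/H)$ for every $H$, and hence $\nil(T)=N(T)$.

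Two points deserve a check. The first is that \cref{prop: prime ideal machine} allows $S$ to live at an arbitrary level $G/H$, not only $G/e$; its statement does permit this. The second is that the element $1$ in $S$ causes no trouble: it forces $\mf{p}(G/H)$ to be proper and hence $\mf{p}$ proper, which is exactly what the proposition uses.

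The real difficulty, namely that the product of Tambara ideals is not computed levelwise, is already handled by \cref{prop: Q and translates} inside the proof of \cref{prop: prime ideal machine}. So the present argument reduces to choosing $S$ correctly. As a byproduct, $N(T)$ is an ideal, being an intersection of ideals.
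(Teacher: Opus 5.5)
Your proposal is correct and follows the paper's proof: both take the multiplicative set of powers of a non-nilpotent $x$ and apply \cref{prop: prime ideal machine} to get a prime ideal missing $x$. The only difference is cosmetic. You include $x^0=1$ in $S$, whereas the paper uses $\{x^n\}_{n\geq 1}$, and either choice works.
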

\begin{proof}
    By \cref{lemma: N is in nil}, we have $N(T)\subseteq \nil(T)$, so it suffices to prove the reverse inclusion.  Suppose that $x\in T(G/H)$ is any element which is not in $N(T)$.  We will construct a prime ideal of $T$ that does not contain $x$, which implies $x\notin \nil(T)$.  Thus $N(T)^c\subseteq \nil(T)^c$ and the proof is complete by taking complements. 
    
    Since $x$ is not nilpotent, the set $S = \{x^n\}_{n=1}^{\infty}$ is a multiplicatively closed subset of $T(G/H)$ which does not contain $0$.  By \cref{prop: prime ideal machine} there is at least one prime ideal $\mf{p}$ which is disjoint from $S$.  In particular, $x\notin \mf{p}(G/H)$ and the proof is complete.
\end{proof}

In \cite[Proposition 4.16]{nakaoka:2012}, Nakaoka defines the radical of an ideal in a Tambara functor $T$ and proves some desirable properties of this construction.  
\begin{definition}[Nakaoka]
    If $I \subseteq T$ is an ideal then the \emph{radical} of $I$, denoted $\sqrt{I}$, is the ideal of all elements $x\in T(G/H)$ such that there exists a natural number $n$ such that $\langle x\rangle^n\subseteq I$.
\end{definition}

Nakaoka showed that $\sqrt{I}$ is contained in the intersection of all prime ideals containing $I$, but left open the question of whether or not this containment could be proper. We end this section by proving that it is always an equality. The key observation is that radical of the zero ideal in $T$ is $\nil(T)$.

\begin{theorem}\label{thm: nakaoka radical}
    Let $T$ be a $G$-Tambara functor and let $y\in T(G/H)$.  The following are equivalent:
    \begin{enumerate}
        \item $y$ is nilpotent;
        \item $\langle y\rangle^n=0$ for some $n$.
    \end{enumerate}
\end{theorem}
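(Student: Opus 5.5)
(2)$\Rightarrow$(1) is immediate and (1)$\Rightarrow$(2) rests on \cref{thm: main}.

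For (2)$\Rightarrow$(1), suppose $\langle y\rangle^n=0$. The product $IJ$ contains the levelwise products $I(G/H)\cdot J(G/H)$, so by induction $\langle y\rangle^n(G/H)$ contains $y^n$. Hence $y^n=0$.

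For (1)$\Rightarrow$(2), suppose $y$ is nilpotent. By \cref{thm: main}, $N(T)=\nil(T)$ is a Tambara ideal, so $\langle y\rangle\subseteq N(T)$. Every element of $\langle y\rangle$ is therefore levelwise nilpotent. The task is to upgrade this to a uniform bound on powers of the \emph{ideal} $\langle y\rangle$, where products are not computed levelwise.

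The strategy is to find a single ideal-theoretic generator set whose products are controlled. Let $y_1,\dots,y_m$ be the finitely many multiplicative translates of $y$ (\cref{remark: finitely many translates}). Each $y_i$ lies in $\nil(T)$, so each is nilpotent; choose $M$ with $y_i^M=0$ for all $i$. By \cref{lemma: ideal generated by a}, each level $\langle y\rangle(G/K)$ is additively generated by elements $\tr^K_L(x y_i)$.

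Next I would show that $\langle y\rangle^k$ is contained in the ideal $J_k$ whose level $G/K$ is spanned by elements $\tr^K_L(x\cdot w)$, where $w$ ranges over products of at least $k$ multiplicative translates of $y$ at level $L$. The main check, and the main obstacle, is that $J_k$ is a Tambara ideal, in particular that it is closed under norms. Closure under restriction and conjugation follows from the double coset formula and \cref{lem:MT of MT}. For norms, Tambara reciprocity (as in the proof of \cref{lemma: Q and sums}) expresses the norm of a sum or of a transfer as a sum of transfers of products of multiplicative translates, each still containing at least $k$ translate factors; the bookkeeping is the same as in \cref{lemma: Q and transfers}. Then $J_a\cdot J_b\subseteq J_{a+b}$ levelwise by Frobenius reciprocity, and since $J_{a+b}$ is an ideal this gives $J_aJ_b\subseteq J_{a+b}$, hence $\langle y\rangle^k\subseteq J_k$.

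Finally take $k=mM$ (or $m(M-1)+1$). By pigeonhole, any product of $k$ translates from the finite set $\{y_i\}$ contains some $y_i^M=0$, so $J_k=0$ and $\langle y\rangle^k=0$. If $J_k$ fails to be norm-closed, a fallback is to show directly that the set of $z$ for which $\langle y\rangle^{j}z$-type statements hold is an ideal, mirroring \cref{prop: Q and translates}, and induct on $k$.
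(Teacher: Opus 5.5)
Your proof is correct. Its core is the same as the paper's: products of elements of $\langle y\rangle$ become sums of transfers of an arbitrary element times a product of many multiplicative translates of $y$, and pigeonhole over the finitely many translates (\cref{remark: finitely many translates}) kills them.

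Where you differ is in how $\langle y\rangle^n$ is controlled. The paper quotes Nakaoka's description of product ideals (elements of $IJ$ are sums of transfers of products $ab$ with $a\in I$, $b\in J$). It then only has to expand $a_1\cdots a_n$ using Frobenius reciprocity and the double coset formula. You instead build the filtration $J_k$ and use minimality of the product ideal. This forces you to check that each $J_k$ is norm-closed, which in effect reproves the special case of Nakaoka's result you need.

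That check does go through. Write $u\in J_k(G/K)$ as $\tr_f(v)$ with $f\colon\coprod_j G/L_j\to G/K$ and $v=(x_jw_j)_j$. Then $\nm_q\tr_f=\tr_{\Pi_q f}\nm_{q'}\res_\varepsilon$. Because $q'$ is a pullback of the surjection $q\colon G/K\to G/K'$, every fiber of $q'$ is nonempty. So each orbit component is again an arbitrary element times a product of at least $k$ translates, by \cref{lem:MT of MT} and the fact that a norm of a translate is a translate. Your fallback is therefore unnecessary. Your route is more self-contained; the paper's is shorter because it outsources this verification to Nakaoka.

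One simplification: you do not need \cref{thm: main} (and with it Zorn's lemma) to know the translates are nilpotent. If $y^m=0$, then any translate $y'=\nm\conj\res(y)$ satisfies $(y')^m=\nm\conj\res(y^m)=\nm(0)=0$, so you may take $M=m$.
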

\begin{proof}
    To see that $(2)$ implies $(1)$, simply observe that $y^n\in \langle y\rangle^n$.  We focus, then, on the implication $(1)\implies (2)$.  Suppose that $y^m=0$ for some $m>1$. We want to find a sufficiently large $n$ so that $\langle y\rangle^n=0$. By \cite[Proposition 3.9]{nakaoka:2012}, elements in the product ideal $\langle y\rangle^n$ are sums of transfers of elements of the form $a_1\cdots a_n$ where $a_i \in \langle y \rangle$ for each $i$. It suffices, then, to show that any sufficiently large product of elements in $\langle y\rangle$ is zero. 

    Recall from \cref{lemma: ideal generated by a}, that an arbitrary element $a_i\in \langle y \rangle(G/K)$ is a sum of elements of the form $\tr^K_{L_i}(x_iy_i')$ where $x_i$ is arbitrary and $y_i'$ is a multiplicative translate of $y$.  We have 
    \begin{align*}
        a_ia_j & = \tr^K_{L_i}(x_iy_i')\cdot  \tr^K_{L_j}(x_jy_j')\\
        & = \tr^K_{L_i}\left(  x_iy_i'\cdot \res^K_{L_i}\tr^K_{L_j}(x_jy_j')\right)\\
        & = \tr^K_{L_i}\left(  x_iy_i'\cdot \sum_{\gamma\in L_i\backslash K/L_j}\tr^{L_i}_{L_i\cap \gamma L_j\gamma^{-1}}\circ c_{\gamma}\circ \res^{L_j}_{\gamma^{-1}L_i\gamma\cap L_j}(x_jy_j')\right)\\
        & = \sum_{\gamma\in L_i\backslash K/L_j} \tr^K_{L_i}\left(  x_iy_i' \cdot \tr^{L_i}_{L_i\cap \gamma L_j\gamma^{-1}}\circ \conj_{\gamma}\circ \res^{L_j}_{\gamma^{-1}L_i\gamma\cap L_j}(x_jy_j')\right)
    \end{align*}
    where the second equality is Frobenius reciprocity, and the second is the double coset formula.  Applying Frobenius reciprocity again, and using the fact that conjugation and restriction are ring maps, we see that $a_ia_j$ is a sum of transfers of elements of the form $x_{ij}y_i''y_j''$ where $y_i''$ and $y_j''$ are multiplicative translates of $y_i$ and $y_j$, respectively. By \Cref{lem:MT of MT}, $y_i''$ and $y_j''$ are both products of multiplicative translates of $y$.  Inductively, we see that the product $a_1\cdots a_n$ is a sum of transfers of elements of the form $x\overline{y}_1\cdots\overline{y}_n$ where each $\overline{y}_i$ is a multiplicative translate of $y$.

    We claim that there exists an $N>0$ large enough so that for all $n>N$ we have that any product $\overline{y}_1\cdots\overline{y}_n \in T(G/K)$ of multiplicative translates of $y$ is zero. Since $y^m = 0$, we have $z^m = 0$ for any multiplicative translate of $y$. Since there are finitely many distinct multiplicative translates $z$ of $y$ (\cref{remark: finitely many translates}), by the pigeonhole principle, any sufficiently large product of multiplicative translates of $y$ must be divisible by $z^m$ for some $z$, hence is zero.  Thus, any sufficiently large product of elements in $\langle y\rangle$ is zero, and we have $\langle y\rangle^n=0$ for $n \gg 0$.
\end{proof}

\begin{corollary}\label{cor: nakaoka radicals}
    Let $I$ be any ideal of a Tambara functor $T$. Then $\sqrt{I}(G/H)$ is the radical of $I(G/H)$. Moreover, $\sqrt{I}$ is the intersection of all prime ideals which contain $I$.
\end{corollary}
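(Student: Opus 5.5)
The plan is to reduce both claims to the case $I=0$ by passing to the quotient Tambara functor $T/I$, and then apply \cref{thm: nakaoka radical} and \cref{thm: main} there.

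\textbf{Levelwise description.} Write $\pi\colon T\to T/I$ for the quotient map. Everything rests on one compatibility: for $x\in T(G/H)$, the image $\pi(\langle x\rangle)$ is the ideal $\langle \pi(x)\rangle$ of $T/I$, and powers of ideals are likewise compatible,
\[
\langle x\rangle^n\subseteq I \iff \langle \pi(x)\rangle^n=0 \text{ in } T/I .
\]
I would verify this with Nakaoka's explicit descriptions. By \cref{lemma: ideal generated by a}, $\langle x\rangle$ is spanned by transfers of products with multiplicative translates, and $\pi$ commutes with all structure maps. By \cite[Proposition 3.9]{nakaoka:2012}, a product of ideals consists of sums of transfers of products. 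Both descriptions are therefore preserved by $\pi$. It follows that $\pi(\langle x\rangle^n)=\langle\pi(x)\rangle^n$, and also that $\langle x\rangle^n\subseteq I$ exactly when this image vanishes.

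With this in hand, \cref{thm: nakaoka radical} applied to $T/I$ gives: $x\in\sqrt I(G/H)$ if and only if $\pi(x)$ is nilpotent in $T(G/H)/I(G/H)$, that is, if and only if $x^m\in I(G/H)$ for some $m$. This is exactly the statement that $x$ lies in the ring-theoretic radical of $I(G/H)$.

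\textbf{Intersection of primes.} Prime ideals of $T$ containing $I$ correspond bijectively to prime ideals of $T/I$, via $\mf p\mapsto \mf p/I$. I would check this with Nakaoka's criterion in terms of $\Q$:
\begin{itemize}
\item multiplicative translates commute with $\pi$;
\item hence $\Q(x,y,\mf p)$ holds in $T$ if and only if $\Q(\pi x,\pi y,\mf p/I)$ holds in $T/I$;
\item properness is preserved as well.
\end{itemize}
Intersecting over this correspondence, the intersection of all primes containing $I$ is $\pi^{-1}(\nil(T/I))$. By \cref{thm: main}, $\nil(T/I)=N(T/I)$, so this preimage is the levelwise radical of $I$. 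By the first part, that is $\sqrt I$.

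\textbf{Main obstacle.} The only real work is the bookkeeping in the compatibility of ideal products with quotients. I would not need the full correspondence theorem for this; it suffices to note that $\pi^{-1}$ of an ideal is an ideal and that the image of an ideal under a surjective Tambara map is an ideal. The minimality in the definition of $IJ$ then transfers in both directions.
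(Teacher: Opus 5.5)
Your proposal is correct and follows the paper's proof: pass to the zero ideal of $T/I$ and apply \cref{thm: nakaoka radical} and \cref{thm: main} there. You also supply the details the paper leaves implicit, namely that the quotient map $\pi$ is compatible with $\langle - \rangle$, with products of ideals, and with the correspondence of primes containing $I$.
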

\begin{proof}
    The case $I=0$ is immediate from  \cref{thm: nakaoka radical}, \cref{thm: main}, and the definition of $\sqrt{0}$. The general case comes from considering the zero ideal in the Tambara functor $T/I$.  
\end{proof}

We also find, as in commutative algebra, that ideals finitely generated by nilpotent elements are nilpotent.
\begin{corollary}
    For $i=1,\dots,n$, let $x_i\in T(G/H_i)$ be elements in a Tambara functor and let $I = \langle x_1,\dots,x_n\rangle$ be the ideal generated by the $x_i$.  If each $x_i$ is nilpotent, then $I$ is a nilpotent ideal, in the sense that $I^N=0$ for some $N$.
\end{corollary}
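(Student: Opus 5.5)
The plan is to reduce to the single-generator case already handled in \cref{thm: nakaoka radical}, using the fact that products of ideals distribute over sums. First, by \cref{thm: nakaoka radical}, for each $i$ there is some $n_i$ with $\langle x_i\rangle^{n_i}=0$. Since $I=\langle x_1\rangle+\cdots+\langle x_n\rangle$ (the sum being the smallest ideal containing each $\langle x_i\rangle$), it suffices to show that $(J_1+\cdots+J_n)^N=0$ whenever each $J_i$ is an ideal with $J_i^{n_i}=0$, and I would take $N=\sum_i (n_i-1)+1$.

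The key step is the distributivity
\[
(J+K)L = JL + KL
\]
for Tambara ideals $J,K,L$, together with associativity and commutativity of the product of ideals. The inclusion $JL+KL\subseteq (J+K)L$ is immediate from minimality of products. For the reverse, note that $JL+KL$ is an ideal, and levelwise $(J+K)(G/H)\cdot L(G/H) = J(G/H)L(G/H)+K(G/H)L(G/H)\subseteq (JL+KL)(G/H)$, so by the defining minimality of $(J+K)L$ we get the inclusion. Associativity I would deduce from the description \cite[Proposition 3.9]{nakaoka:2012} of product ideals as sums of transfers of levelwise products (as used in the proof of \cref{thm: nakaoka radical}); alternatively, I would avoid needing associativity in full by working directly with $N$-fold products, each of which, by that same description, is generated by sums of transfers of elements $a_1\cdots a_N$ with $a_k\in I$.

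Given this, the cleanest route is actually the element-level one, mirroring the proof of \cref{thm: nakaoka radical}: an element of $I(G/K)$ is a sum of elements $\tr^K_L(x\,y')$ with $y'$ a multiplicative translate of some $x_i$ (combining \cref{lemma: ideal generated by a} with the levelwise description of sums). Expanding $a_1\cdots a_N$ via Frobenius reciprocity and the double coset formula exactly as before, it becomes a sum of transfers of elements $x\,\overline{y}_1\cdots\overline{y}_N$ where each $\overline{y}_k$ is a product of multiplicative translates of some $x_{i_k}$ (\cref{lem:MT of MT}). Each $x_i$ has only finitely many multiplicative translates (\cref{remark: finitely many translates}), each nilpotent since $x_i$ is; so the union over all $i$ is a finite set of nilpotent elements, and by pigeonhole a long enough product is divisible by $z^m$ for some such $z$ with $z^m=0$. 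Hence $I^N=0$ for $N$ large.

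The main obstacle is the bookkeeping in the expansion step: checking that the description of $\langle x_1,\dots,x_n\rangle$ as sums of transfers of multiples of multiplicative translates of the various $x_i$ is correct (it follows since the sum of ideals is levelwise and each summand is described by \cref{lemma: ideal generated by a}), and that the inductive Frobenius/double-coset argument from \cref{thm: nakaoka radical} goes through verbatim when the translates come from different generators. I expect this to be routine since nothing in that argument used that all translates came from a single $y$.
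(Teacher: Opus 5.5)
Your proposal is correct, and your first route is the paper's proof. The paper writes $I=\sum_i\langle x_i\rangle$ and picks a common $k$ with $\langle x_i\rangle^k=0$ via \cref{thm: nakaoka radical}. It then invokes distributivity of ideal products over sums, citing \cite[Proposition 3.15]{nakaoka:2012} rather than proving it as you do, to get $I^{nk}=0$. Your exponent $\sum_i(n_i-1)+1$ is a sharper count from the same expansion.

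The element-level route you call cleanest is genuinely different. Instead of using \cref{thm: nakaoka radical} as a black box and doing arithmetic in the lattice of ideals, it reruns that theorem's proof with multiplicative translates drawn from all the $x_i$ at once. This is valid. Nothing in that argument used a single generator, and the translates of $x_1,\dots,x_n$ together still form a finite set of elements all killed by one uniform power $m$.

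What this route buys is that it needs no associativity or distributivity of ideal products. It only uses the description of $I^N$ as sums of transfers of $N$-fold products from \cite[Proposition 3.9]{nakaoka:2012}, and it produces an explicit $N$ directly (roughly the number of translates times $m$). The cost is repeating the Frobenius and double-coset bookkeeping. By contrast, the paper's route is a two-line formal consequence once the ideal arithmetic is in place.
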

\begin{proof}
    Observe that we have $I =  \sum_{i=1}^n \langle x_i\rangle$. Pick a $k>0$ large enough so that $\langle x_i\rangle^{k}=0$ for all $i$; this exists by \cref{thm: nakaoka radical}.  Using the fact that multiplication of ideals distributes over addition of ideals (\cite[Proposition 3.15]{nakaoka:2012}) we have that $\left(\sum_{i=1}^n \langle x_i\rangle\right)^{nk}=0$, hence $I^{nk}=0$.
\end{proof}

\section{Spectral spaces}\label{sec: spectral spaces}

Recall that a topological space is said to be \emph{spectral} if it is homeomorphic to the Zariski spectrum of a commutative ring.  An equivalent condition is that the space is
\begin{enumerate}
    \item sober (every nonempty irreducible closed subset has a unique generic point), and
    \item the set of quasi-compact opens is closed under finite intersection and forms a basis.
\end{enumerate}

We established that $\Spec(T)$ is sober in \cite[Theorem A]{CMQSV:24}. Note that condition (2) includes also the empty intersection, i.e. a spectral space must be quasi-compact  -- this is proven of $\Spec(T)$ in \textit{loc.\ cit.}  To show that $\Spec(T)$ is spectral, it remains to be shown that the set of quasi-compact opens forms a basis which is closed under binary intersection.

We claim it is enough to prove that each $D(f)$, as defined in \cref{defn: D(f)}, is quasi-compact, since the $D(f)$'s form a basis for the topology of $\Spec(T)$. Indeed, if this is established, then every quasi-compact open set is a finite union of $D(f)$'s, and the fact that $D(f) \cap D(g)$ is a finite union of $D(h)$'s (\cref{lemma: intersection of Ds,remark: finite union}) implies that the intersection of any two quasi-compact opens will remain quasi-compact.

\begin{proposition}
    For any Tambara functor $T$ and any element $f$ in $T$ the basic open $D(f)$ is quasi-compact.
\end{proposition}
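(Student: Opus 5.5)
Suppose $D(f) = \bigcup_{\alpha} D(g_\alpha)$ with $f \in T(G/H)$; it suffices to treat covers by basic opens. Let $I = \langle g_\alpha \rangle_\alpha$ be the ideal generated by all the $g_\alpha$. Then $D(f) \subseteq D(I) = \bigcup_\alpha D(g_\alpha)$, i.e.\ $V(I) \subseteq V(\langle f\rangle)$: every prime ideal containing $I$ contains $f$. By \cref{cor: nakaoka radicals}, the intersection of all primes containing $I$ is $\sqrt{I}$, computed levelwise. Hence $f$ lies in the levelwise radical of $I(G/H)$, so $f^n \in I(G/H)$ for some $n$.

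Next I would use the finiteness of expressions. The ideal generated by a family is the sum of the ideals $\langle g_\alpha\rangle$; equivalently, it is the directed union over finite subfamilies $F$ of $I_F = \sum_{\alpha\in F}\langle g_\alpha\rangle$. This union is a Tambara ideal, since each $I_F$ is one and the family is directed. Therefore $f^n$, being a single element of $I(G/H)$, already lies in $I_F(G/H)$ for some finite $F$. (Alternatively, \cref{lemma: ideal generated by a} shows $f^n$ is a finite sum of terms $\tr(x \cdot g')$ with $g'$ a multiplicative translate of some $g_\alpha$, so only finitely many $\alpha$ occur.)

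Finally, conclude $D(f) \subseteq \bigcup_{\alpha\in F} D(g_\alpha)$. If a prime $\mathfrak{p}$ contains all $g_\alpha$ with $\alpha\in F$, then $I_F \subseteq \mathfrak{p}$, so $f^n \in \mathfrak{p}(G/H)$. Since primes are levelwise radical (\cite[Theorem 4.7]{CMQSV:24}, as used in \cref{lemma: N is in nil}), $f \in \mathfrak{p}$. Taking contrapositives gives the inclusion, and hence a finite subcover.

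The main obstacle is the first step: translating the covering condition into the statement that $f$ lies in the radical of $I$. This needs the nontrivial fact that the intersection of the primes containing $I$ is the levelwise radical. That is exactly \cref{cor: nakaoka radicals}, which rests on \cref{thm: main} and \cref{prop: prime ideal machine} applied to $T/I$. With it available, the rest is formal.
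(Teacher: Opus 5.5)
Your proof is correct and follows essentially the same route as the paper. You use \cref{thm: main} via the quotient $T/\langle g_\alpha\rangle$ (which is what \cref{cor: nakaoka radicals} packages) to get $f^n \in \langle g_\alpha \rangle$, pass to finitely many generators, and return to the cover using that primes are levelwise radical; your directed-union argument also makes explicit the finiteness step that the paper asserts without comment.
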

\begin{proof}
    Write $D(f) = \bigcup_{i \in I} D(g_i)$ for some collection of elements $\{g_i : i \in I\}$. In particular,
    \[\forall \mf{p} \in \Spec(T) \, (\{g_i : i \in I\} \subseteq \mf{p} \implies f \in \mf{p}).\]
    Equivalently, (the image of) $f$ lies in $\nil(T/\langle g_i : i \in I \rangle)$. By \Cref{thm: main}, this means that the image of $f$ is nilpotent in $T/\langle g_i : i \in I \rangle$, i.e. $f^n \in \langle g_i : i \in I \rangle$ for some natural number $n$. Then $f^n \in \langle g_{i_1}, \dots, g_{i_k} \rangle$ for some $i_1, \dots, i_k \in I$. Now the image of $f$ lies in $\nil(T/\langle g_{i_1}, \dots, g_{i_k} \rangle)$, so
    \[\forall \mf{p} \in \Spec(T) \, (\{ g_{i_1}, \dots, g_{i_k} \} \subseteq \mf{p} \implies f \in \mf{p}).\]
    Equivalently, $V(\langle g_{i_1}, \dots, g_{i_k} \rangle) \subseteq V(f)$, i.e. $D(f) \subseteq \bigcup_{j=1}^k D(g_{i_j})$.
\end{proof}
\begin{corollary}\label{cor: spectral}
    The Nakaoka spectrum of any Tambara functor is a spectral space.
\end{corollary}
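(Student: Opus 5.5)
The plan is to assemble the three ingredients of the characterization of spectral spaces recalled at the start of this section: sobriety, quasi-compactness, and the requirement that the quasi-compact opens form a basis closed under finite intersections. Sobriety and quasi-compactness of $\Spec(T)$ are already available from \cite[Theorem A]{CMQSV:24}, so only the third condition needs an argument. For that, I would combine the preceding proposition (each $D(f)$ is quasi-compact) with the lemma that the sets $D(f)$ form a basis of the topology.

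The first step is to identify the quasi-compact opens precisely as the finite unions of basic opens $D(f)$. One direction is immediate: a finite union of quasi-compact sets is quasi-compact. Conversely, any quasi-compact open $U$ is a union of sets $D(f)$, since these form a basis, and quasi-compactness extracts a finite subcover. Consequently the quasi-compact opens form a basis, because they contain all the $D(f)$.

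The second step is closure under binary intersection. If $U=\bigcup_i D(e_i)$ and $V=\bigcup_j D(f_j)$ are finite unions, then distributing gives $U\cap V=\bigcup_{i,j} D(e_i)\cap D(f_j)$. By \cref{lemma: intersection of Ds} and \cref{remark: finite union}, each $D(e_i)\cap D(f_j)$ is a finite union of sets $D(z)$, where $z$ ranges over the generalized products of $e_i$ and $f_j$. Hence $U\cap V$ is again a finite union of basic opens, and so it is quasi-compact. The empty intersection is the whole space $\Spec(T)$, which is quasi-compact by loc.\ cit.

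Once these steps are in place, the standard characterization of spectral spaces gives the result. There is essentially no obstacle left at this point. The genuine difficulty was the quasi-compactness of $D(f)$, which rested on \cref{thm: main}. The only point that needs care here is that the union in \cref{lemma: intersection of Ds} is finite, and this follows from the finiteness of multiplicative translates noted in \cref{remark: finitely many translates}.
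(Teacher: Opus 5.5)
Your proposal is correct and is essentially the paper's argument. The paper likewise takes sobriety and quasi-compactness from \cite[Theorem A]{CMQSV:24} and reduces the remaining condition to quasi-compactness of each $D(f)$. It then uses that the $D(f)$ form a basis and that $D(e)\cap D(f)$ is a finite union of sets $D(z)$ (\cref{lemma: intersection of Ds}, \cref{remark: finite union}), so quasi-compact opens are exactly the finite unions of basic opens and are closed under intersection.
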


\section{The Kilradical}\label{sec: kilradical}

In this final section we study the collection of elements in a Tambara functor  $T$ which kill $T$ upon being inverted. In commutative rings, this is exactly the collection of nilpotent elements.  For Tambara functors, this collection can be strictly larger than the set of nilpotents, and we call these element \emph{kilpotents}.  To give a precise definition, we recall the existence of localizations for Tambara functors.
\begin{proposition}\label{proposition: localizations exist}
    Let $T$ be a $G$-Tambara functor and let $x\in T(G/H)$.  Then there is a Tambara functor $T[1/x]$ and a map $T\to T[1/x]$ which is initial among all maps of Tambara functors $f\colon T\to S$ such that $f(x)$ is a unit.
\end{proposition}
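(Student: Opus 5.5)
The most economical route is to build $T[1/x]$ as a filtered colimit of quotients of a free $T$-algebra on one inverse variable at level $G/H$, and then verify the universal property directly.

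\textbf{Construction.} Let $T[y_H]$ denote the free $T$-algebra on a generator $y$ at level $G/H$. Concretely, this is the composite of $T$ with the representable Tambara functor $\mathcal{P}_G(G/H,-)$. Since Tambara functors form a category of models for a (multi-sorted) algebraic theory, namely product-preserving functors out of $\mathcal{P}_G$, it is locally presentable and in particular cocomplete. So this coproduct $T\otimes\mathcal{A}_{G/H}$ exists, and maps out of it are pairs: a map from $T$ together with an element of level $G/H$. We then set
\[
T[1/x] := T[y_H]/\langle xy-1\rangle,
\]
the quotient by the Tambara ideal generated by $xy-1\in T[y_H](G/H)$. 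This quotient is a Tambara functor by the remark following the definition of ideals.

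\textbf{Universal property.} Let $f\colon T\to S$ be a map with $f(x)$ a unit. Maps $T[y_H]\to S$ extending $f$ correspond to elements $s\in S(G/H)$. Such a map factors through the quotient exactly when $f(x)s-1$ lies in the kernel ideal, which here means $f(x)s=1$. Since inverses in the commutative ring $S(G/H)$ are unique, there is exactly one such $s$, namely $f(x)^{-1}$. Hence there is a unique factorization, and the image of $x$ in $T[1/x]$ is a unit with inverse $y$. Checking that the kernel of a Tambara morphism is a Tambara ideal is immediate from naturality.

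\textbf{Main obstacle.} The delicate point is existence of the free object $T[y_H]$. I would justify it abstractly, appealing to the general theory of algebraic theories and locally presentable categories: the forgetful functor to $G$-sets-indexed families of sets has a left adjoint, and coproducts exist. Alternatively, as a second route, one could treat everything inside the algebraic theory directly. There, $T[1/x]$ is the Tambara functor presented by the generators of $T$ together with a new generator $y$, subject to the relations of $T$ and the relation $xy=1$. Presentations exist in any variety, and the universal property above is then tautological.
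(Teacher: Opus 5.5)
Your argument is correct, but it takes a different route from the paper. The paper constructs nothing: it specializes Nakaoka's theory of fractions \cite[Proposition 4.6]{Nak:fractions} to the multiplicative sub-semi-Mackey functor $\mathscr{S}$ generated by $x$, which at each level consists of products of multiplicative translates of $x$. Nakaoka's localization is explicit and levelwise: $T[1/x](G/K)$ is the ordinary ring of fractions of $T(G/K)$ with denominators in $\mathscr{S}(G/K)$. The content of that result is checking that the structure maps extend to fractions. Your approach is the generic ``adjoin an inverse by generators and relations'' argument, valid in any variety. It uses only formal facts (free objects, coproducts, quotients by ideals, kernels being ideals) and makes the universal property tautological. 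The price is that it says nothing about what the levels of $T[1/x]$ look like. That is enough for this paper, because the only later use, in the proof of \cref{thm: kilradical}, goes through the universal property. The explicit description would make \cref{thm: kilradical} nearly immediate, though. The multiplicative translates landing at level $G/e$ are exactly the $\conj_g\res^H_e(x)$, so $\mathscr{S}(G/e)$ consists of monomials in these elements. By \cref{lemma: zero anywhere is zero everywhere}, $T[1/x]=0$ if and only if $0\in\mathscr{S}(G/e)$.

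A few small corrections to your write-up:
\begin{itemize}
\item Product-preserving functors out of $\mathcal{P}_G$ are \emph{semi}-Tambara functors. The representable $\mathcal{P}_G(G/H,-)$ takes values in semirings, so it is not a Tambara functor.
\item Your conclusion still holds. Tambara functors form a variety if you adjoin a negation operation, which morphisms preserve automatically. Equivalently, the free Tambara functor on a generator at $G/H$ is the additive completion \cite{tambara:1993} of that representable.
\item $T[y_H]$ is the coproduct (box product) of $T$ with this free object, not a ``composite''.
\end{itemize}
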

\begin{proof}
    This is a special case of \cite[Proposition 4.6]{Nak:fractions}.  Specifically, in Nakaoka's notation, it is the case where $\mathscr{S}$ is the multiplicative sub-semi-Mackey functor generated by $x$, and $\mathscr{S}'$ is the multiplicative sub-semi-Mackey functor of units.
\end{proof}

\begin{definition}
    An element $x\in T(G/H)$ is a \emph{kilpotent} if $T[1/x]=0$. The collection of all kilpotent elements in $T$ is called the \emph{kilradical}, denoted $\kil(T)$.
\end{definition}

As first examples, it is easy to check that every nilpotent element in a Tambara functor is a kilpotent.  Moreover, every element $x\in T(G/H)$ such that $\res^H_e(x)$ is $0$ (or nilpotent) is also a kilpotent, as inverting $x$ necessarily inverts $\res^H_e(x)$.

\begin{example}\label{example: bad kilpotents}
    We give an example of an $x\in \kil(T)$ which is neither nilpotent  nor in the kernel of a restriction map. Let $G=C_2$, and consider the Tambara functor
\[\begin{tikzcd}
	{\mathbb{R}} \\
	{\mathbb{R}\times \mathbb{R}}
	\arrow[from=1-1, to=2-1]
	\arrow["{+}", shift left=3, from=2-1, to=1-1]
	\arrow["\times"', shift right=3, from=2-1, to=1-1]
	\arrow["{\mathrm{swap}}"', from=2-1, to=2-1, loop, in=305, out=235, distance=10mm]
\end{tikzcd}\]
where the restriction map is the diagonal.  The element $(0,1)$ at the $C_2/e$-level is not nilpotent (and not in the kernel of a restriction), but is in $\kil(T)$, as $\nm^{C_2}_e(0,1) = 0$.
\end{example}
\begin{remark}
    We warn the reader that $\kil(T)$ is not necessarily an ideal of the Tambara functor $T$.  Indeed, in the last example, $\kil(T)(C_2/e)$ consists of all elements of the form $(r,0)$ or $(0,r)$.  As this collection is not closed under addition, this cannot be an ideal.  We show below that when the Weyl action of $G$ on $T(G/e)$ is trivial then $\kil(T)$ is an ideal.
\end{remark}

Heuristically, an element is a kilpotent if some product of multiplicative translates of $x$ can yield $0$. The next theorem makes this heuristic explicit, and more checkable.

\begin{theorem}\label{thm: kilradical}
    An element $x\in T(G/H)$ is a kilpotent if and only if 
    \[
        \prod_{g\in G}\conj_g\res^H_e(x)
    \]
    is nilpotent in $T(G/e)$.
\end{theorem}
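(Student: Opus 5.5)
Write $y = \res^H_e(x) \in T(G/e)$ and $P = \prod_{g\in G}\conj_g(y)$. The plan is to reduce everything to the underlying ring $T(G/e)$ with its $G$-action, using \cref{lemma: zero anywhere is zero everywhere}: a Tambara functor is zero as soon as its bottom level is zero.

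\emph{($\Leftarrow$).} Suppose $P$ is nilpotent and let $f\colon T\to S$ invert $x$. Then $f(y)=\res^H_e f(x)$ is a unit in $S(G/e)$. Since $f$ commutes with conjugations, each $\conj_g f(y)$ is also a unit, and hence so is $f(P)$. A nilpotent unit forces $S(G/e)=0$, so $S=0$ by \cref{lemma: zero anywhere is zero everywhere}. In particular $T[1/x]=0$.

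\emph{($\Rightarrow$).} Suppose $P$ is not nilpotent. Let $R=T(G/e)$; its $G$-action fixes $P$, so $P\in R^G$. I will produce a nonzero Tambara functor receiving a map from $T$ that inverts $x$. I would take $S=\FP(R[1/P])$, where $R[1/P]$ is nonzero because $P$ is not nilpotent, and $G$ acts on it because $P$ is $G$-invariant. Composing the unit $T\to\FP(R)$ of \cref{example FP} with $\FP(R)\to\FP(R[1/P])$ gives a map $f\colon T\to S$ with $S\neq 0$. It remains to check that $f(x)$ is a unit in $S(G/H)=R[1/P]^H$.

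\emph{Unit check.} I expect this to be the main step, though it should be short. The unit map $T\to\FP(R)$ commutes with restriction, and restriction in $\FP$ is the inclusion, so $f(x)$ is just the image of $y$ in $R[1/P]$, viewed as an $H$-fixed element. In $R[1/P]$ the element $y$ divides $P$ (take the factor $g=e$), so $y$ is a unit there. Its inverse $y^{-1}=P^{-1}\prod_{g\neq e}\conj_g y$ is $H$-invariant: $y$ is $H$-invariant, and inverses are unique. So $f(x)$ is a unit in $R[1/P]^H$. Hence $T[1/x]$ maps to $S\neq 0$, and therefore $T[1/x]\neq 0$ (a map out of the zero ring lands only in the zero ring). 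This completes the contrapositive.

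The one point to double-check is that $\FP$ of a localization by an invariant element is a legitimate Tambara functor with the induced action, which is clear because $R\to R[1/P]$ is an equivariant ring map.
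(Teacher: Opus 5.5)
Your proof is correct and follows the paper's argument step for step: nilpotence of $P$ forces a nilpotent unit at level $G/e$, and \cref{lemma: zero anywhere is zero everywhere} then kills the target. Conversely, $T\to T[1/x]$ must factor through the nonzero Tambara functor $\FP(T(G/e)[1/P])$. Your explicit check that the inverse of $y$ in $R[1/P]$ is $H$-fixed fills in a detail the paper leaves implicit.
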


\begin{proof}
    For brevity, let us write $y = \prod_{g\in G}\conj_g\res^H_e(x)$.

    Suppose first that $y$ is nilpotent. Let $\varphi : T \to T[1/x]$ be the localization map. Since $\varphi_H(x)$ is a unit and restriction and conjugation are ring homomorphisms, $\varphi_e(y)$ is a unit. Moreover, since $y$ is nilpotent, $\varphi_e(y)$ is nilpotent. Thus, $T[1/x](G/e) = 0$, and so by \cref{lemma: zero anywhere is zero everywhere} we have $T[1/x] = 0$ i.e. $x$ is kilpotent.

    In the other direction, suppose $y$ is not nilpotent. Since $y$ is fixed by the Weyl action of $G$ on $T(G/e)$, this Weyl action descends to an action of $G$ on $T(G/e)[1/y]$ (and the localization map $T(G/e) \to T(G/e)[1/y]$ is $G$-equivariant). We then obtain a morphism of Tambara functors
    \[
        T \to \FP(T(G/e)) \to \FP(T(G/e)[1/y])
    \]
    where the first map is the unit; see \Cref{example FP}.  The element $x \in T(G/H)$ is sent to \[\frac{\res^H_e(x)}{1} \in T(G/e)[1/y]^H = \FP(T(G/e)[1/y])(G/H)\] by this composite, which is a unit since it divides the element $\frac{y}{1}$. Thus, we obtain a factorization
    \[T \to T[1/x] \to \FP(T(G/e)[1/y]).\]
    Since $y$ is not nilpotent, we have $T(G/e)[1/y] \neq 0$, whence $\FP(T(G/e)[1/y]) \neq 0$. Thus $T[1/x] \neq 0$, i.e. $x$ is not kilpotent.\qedhere

\end{proof}
\begin{corollary}\label{cor:  trivial action means kilradical is an ideal}
    If $T$ is a Tambara functor with $G$ acting trivially on $T(G/e)$, then $\kil(T)$ is the ideal given levelwise by the preimage under restriction of $\nil(T(G/e))$.
\end{corollary}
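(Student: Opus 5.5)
The plan is to reduce everything to \cref{thm: kilradical}. Write $R = T(G/e)$. Since $G$ acts trivially on $R$, each $\conj_g$ acts as the identity on $R$. For $x \in T(G/H)$ we therefore get
\[
\prod_{g\in G}\conj_g\res^H_e(x) = \res^H_e(x)^{|G|}.
\]
This power is nilpotent exactly when $\res^H_e(x)$ is nilpotent. So \cref{thm: kilradical} says that $x \in \kil(T)(G/H)$ if and only if $\res^H_e(x) \in \nil(R)$. In other words, levelwise $\kil(T)(G/H) = (\res^H_e)^{-1}(\nil(R))$. One small point: $\conj_{g,e}$ is exactly the Weyl action of $g$ on $T(G/e)$, so it is trivial by hypothesis.

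It remains to check that the collection $I(G/H) := (\res^H_e)^{-1}(\nil(R))$ is a Tambara ideal. Each $I(G/H)$ is an ideal of $T(G/H)$, since it is the preimage of an ideal under a ring homomorphism; here $\nil(R)$ is the ordinary nilradical. For the closure properties I will verify each structure map in turn.

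\textbf{Restriction.} We have $\res^K_e\res^H_K = \res^H_e$, so restriction preserves $I$.

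\textbf{Conjugation.} We have $\res^{gHg^{-1}}_e \conj_{g,H} = \conj_{g,e}\res^H_e$. Since $\conj_{g,e}$ is the identity on $R$, conjugation preserves $I$.

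\textbf{Transfer.} By the double coset formula with $\res^K_e\tr^K_H(x)$, this element is a sum of terms $\tr^e_e \conj_{\gamma}\res^H_e(x)$. Each term equals $\res^H_e(x)$, because conjugation is trivial, so the sum is $[K:H]\res^H_e(x)$. This is nilpotent when $\res^H_e(x)$ is.

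\textbf{Norm.} Similarly, the norm--restriction double coset formula gives
\[
\res^K_e\nm^K_H(x)=\prod_{\gamma\in K/H}\conj_\gamma\res^H_e(x)=\res^H_e(x)^{[K:H]},
\]
which is again nilpotent when $\res^H_e(x)$ is.

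I do not expect a serious obstacle. The only step needing care is the transfer and norm computations: one must correctly invoke the double coset formulas restricted to the trivial subgroup, and note that all conjugations on level $G/e$ collapse to the identity. Combining the levelwise identification with the closure checks shows that $\kil(T)$ is the stated ideal.
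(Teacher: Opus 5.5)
Your proposal is correct and follows the paper's argument: the paper's proof is exactly your first paragraph, applying \cref{thm: kilradical} together with $\prod_{g\in G}\conj_g\res^H_e(x)=\res^H_e(x)^{\lvert G\rvert}$. Your closure checks under restriction, conjugation, transfer, and norm supply the ideal property that the paper leaves implicit. In fact those checks go through even without the triviality hypothesis, since conjugates, sums, and products of nilpotents are nilpotent; triviality is only needed to identify this ideal with $\kil(T)$.
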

\begin{proof}
    We have that $x$ is kilpotent if and only if
    \[\prod_{g \in G} \conj_g \res^H_e(x) = \res^H_e(x)^{\lvert G \rvert}\]
    is nilpotent, if and only if $\res^H_e(x)$ is nilpotent.
\end{proof}

\begin{corollary}\label{cor: kil of A}
    $\kil(\uA)(G/H) = \ker \res^H_e$.
\end{corollary}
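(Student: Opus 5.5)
We deduce this from \cref{cor:  trivial action means kilradical is an ideal}, so we must show two things about $\uA$: that the Weyl action of $G$ on $\uA(G/e)$ is trivial, and that $\nil(\uA(G/e))=0$. The conclusion then reads: $x\in A(H)$ is kilpotent if and only if $\res^H_e(x)$ lies in the nilradical of $A(e)$, that is, if and only if $\res^H_e(x)=0$.

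First we identify the bottom level. We have $\uA(G/e)=A(e)$, the Burnside ring of the trivial group. This is the group completion of isomorphism classes of finite sets under disjoint union and product, so $A(e)\cong\mathbb{Z}$ via cardinality. The isomorphism sends $[X]-[Y]$ to $\lvert X\rvert-\lvert Y\rvert$ and is multiplicative because cardinality of products multiplies. Since $\mathbb{Z}$ is reduced, the nilradical of $\uA(G/e)$ is $0$.

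Next we check that the Weyl action is trivial. The Weyl action of $g\in G$ on $\uA(G/e)$ is $\conj_{g,e}$. By \cref{example: Burnside}, this map is pushforward along the isomorphism of categories $\mathrm{set}_e\to\mathrm{set}_{geg^{-1}}=\mathrm{set}_e$ induced by conjugation on the trivial group, which is the identity functor. Even without this description, $\conj_g$ is a ring automorphism of $\mathbb{Z}$, and $\mathbb{Z}$ has only the identity ring automorphism. Either way, $G$ acts trivially on $\uA(G/e)$.

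Applying \cref{cor:  trivial action means kilradical is an ideal} now gives $\kil(\uA)(G/H)=(\res^H_e)^{-1}(\nil(\uA(G/e)))=(\res^H_e)^{-1}(0)=\ker\res^H_e$. One could also argue directly from \cref{thm: kilradical}: the product $\prod_{g}\conj_g\res^H_e(x)$ equals $\res^H_e(x)^{\lvert G\rvert}$ in $\mathbb{Z}$, and this is nilpotent exactly when $\res^H_e(x)=0$. Neither step presents a real obstacle; the only point to state carefully is the identification $A(e)\cong\mathbb{Z}$ as rings.
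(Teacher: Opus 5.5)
Your proposal is correct and follows the paper's proof: identify $\uA(G/e)\cong\mathbb{Z}$, note that it has trivial Weyl action and zero nilradical, and apply \cref{cor:  trivial action means kilradical is an ideal}. Your observation that $\mathbb{Z}$ has only the identity ring automorphism is a clean way to justify the trivial action, which the paper leaves implicit.
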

\begin{proof}
    $\uA(G/e) \cong \mathbb{Z}$ has trivial Weyl action and trivial nilradical.
\end{proof}

This illustrates the contrast between the nilradical and kilradical: we have $\kil(\uA)(G/H) \neq 0$ whenever $H \neq e$, since (for example) $\res^H_e([H/e] - \lvert H \rvert) = 0$. On the other hand, one always has $\nil(\uA) = 0$, since $0$ is a prime ideal of $\uA$~\cite[Theorem 4.40]{nakaoka:2012}.

\begin{example}\label{stable stems}
    The (non-equivariant) stable homotopy groups of spheres form a graded commutative ring, and Nishida's nilpotence theorem says that every positive degree element is nilpotent \cite{Nishida}.  The $G$-equivariant stable homotopy groups of spheres, which we denote by $\pi_{\star}^s$, form an $\mathrm{RO}(G)$-graded Tambara functor; see \cite{AngeltveitBohmann} for definitions.
    
    Suppose that $\alpha = [V]-[W]\in \mathrm{RO}(G)$ is an arbitrary element with $\dim(V)-\dim(W)\neq 0$.  In contrast to the non-equivariant setting, there can be non-nilpotent elements in $\pi^s_{\alpha}(G/H)$.  On the other hand, since $\pi^s_{\alpha}(G/e) \cong \pi_{\dim V - \dim W}(\mathbb{S})$ is a non-equivariant stable stem in nonzero dimension, the restriction of element in $\pi^s_{\alpha}(G/H)$ to $\pi^s_{\alpha}(G/e)$ is necessary nilpotent.  Thus every element in $\pi^s_{\alpha}$ is a kilpotent. A straightforward argument shows that if $\dim(V)-\dim(W)=0$ then the only kilpotents in $\pi^s_{\alpha}$ are those elements which are in the kernel of the restriction map $\res^{H}_e$.  It follows from these two observations that the collection of kilpotents in $\pi^s_{\star}$ form (an appropriate $\mathrm{RO}(G)$-graded generalization of) an ideal.
\end{example}

\printbibliography

@article{HHR16,
AUTHOR = {Hill, M. A. and Hopkins, M. J. and Ravenel, D. C.},
     TITLE = {On the nonexistence of elements of {K}ervaire invariant one},
   JOURNAL = {Ann. of Math. (2)},
  FJOURNAL = {Annals of Mathematics. Second Series},
    VOLUME = {184},
      YEAR = {2016},
    NUMBER = {1},
     PAGES = {1--262},
      ISSN = {0003-486X,1939-8980},
   MRCLASS = {55P91 (55N22 55P42 55Q45 55T15 55U35 57R15)},
  MRNUMBER = {3505179},
MRREVIEWER = {Paul\ G.\ Goerss},
       DOI = {10.4007/annals.2016.184.1.1},
       URL = {https://doi.org/10.4007/annals.2016.184.1.1},
}

@article{nakaoka:2012,
 AUTHOR = {Nakaoka, Hiroyuki},
     TITLE = {Ideals of {T}ambara functors},
   JOURNAL = {Adv. Math.},
  FJOURNAL = {Advances in Mathematics},
    VOLUME = {230},
      YEAR = {2012},
    NUMBER = {4-6},
     PAGES = {2295--2331},
      ISSN = {0001-8708,1090-2082},
   MRCLASS = {19A22},
  MRNUMBER = {2927371},
MRREVIEWER = {Robert\ Hartmann},
       DOI = {10.1016/j.aim.2012.04.021},
       URL = {https://doi.org/10.1016/j.aim.2012.04.021},
}

@misc{strickland:2012,
      title={Tambara functors}, 
      author={Neil Strickland},
      year={2012},
      eprint={1205.2516},
      archivePrefix={arXiv},
      primaryClass={math.AT},
      url={https://arxiv.org/abs/1205.2516}, 
}

@article{tambara:1993,
 AUTHOR = {Tambara, D.},
     TITLE = {On multiplicative transfer},
   JOURNAL = {Comm. Algebra},
  FJOURNAL = {Communications in Algebra},
    VOLUME = {21},
      YEAR = {1993},
    NUMBER = {4},
     PAGES = {1393--1420},
      ISSN = {0092-7872,1532-4125},
   MRCLASS = {19A22 (18A25 20C99 20J06)},
  MRNUMBER = {1209937},
MRREVIEWER = {Jacques\ Th\'evenaz},
       DOI = {10.1080/00927879308824627},
       URL = {https://doi.org/10.1080/00927879308824627},
}

@article{nakaoka:2014,
AUTHOR = {Nakaoka, Hiroyuki},
     TITLE = {The spectrum of the {B}urnside {T}ambara functor on a finite
              cyclic {$p$}-group},
   JOURNAL = {J. Algebra},
  FJOURNAL = {Journal of Algebra},
    VOLUME = {398},
      YEAR = {2014},
     PAGES = {21--54},
      ISSN = {0021-8693,1090-266X},
   MRCLASS = {20Dxx},
  MRNUMBER = {3123752},
       DOI = {10.1016/j.jalgebra.2013.09.010},
       URL = {https://doi.org/10.1016/j.jalgebra.2013.09.010},
}

@article{mazur:2019,
AUTHOR = {Hill, Michael A. and Mazur, Kristen},
     TITLE = {An equivariant tensor product on {M}ackey functors},
   JOURNAL = {J. Pure Appl. Algebra},
  FJOURNAL = {Journal of Pure and Applied Algebra},
    VOLUME = {223},
      YEAR = {2019},
    NUMBER = {12},
     PAGES = {5310--5345},
      ISSN = {0022-4049,1873-1376},
   MRCLASS = {19A22 (18G99 20J05 55P91)},
  MRNUMBER = {3975068},
MRREVIEWER = {J.\ P. C. Greenlees},
       DOI = {10.1016/j.jpaa.2019.04.001},
       URL = {https://doi.org/10.1016/j.jpaa.2019.04.001},
}

@article{hill:2017,
 AUTHOR = {Hill, Michael A.},
     TITLE = {On the {A}ndr\'e-{Q}uillen homology of {T}ambara functors},
   JOURNAL = {J. Algebra},
  FJOURNAL = {Journal of Algebra},
    VOLUME = {489},
      YEAR = {2017},
     PAGES = {115--137},
      ISSN = {0021-8693,1090-266X},
   MRCLASS = {18G55 (20J05)},
  MRNUMBER = {3686975},
MRREVIEWER = {Vigleik\ Angeltveit},
       DOI = {10.1016/j.jalgebra.2017.06.029},
       URL = {https://doi.org/10.1016/j.jalgebra.2017.06.029},
}

@article{CalleGinnett:23,
   AUTHOR = {Calle, Maxine and Ginnett, Sam},
     TITLE = {The spectrum of the {B}urnside {T}ambara functor of a cyclic
              group},
   JOURNAL = {J. Pure Appl. Algebra},
  FJOURNAL = {Journal of Pure and Applied Algebra},
    VOLUME = {227},
      YEAR = {2023},
    NUMBER = {8},
     PAGES = {Paper No. 107344, 23},
      ISSN = {0022-4049,1873-1376},
   MRCLASS = {19A22 (13A15 18B99 55P91)},
  MRNUMBER = {4552396},
MRREVIEWER = {Geoffrey\ M. L. Powell},
       DOI = {10.1016/j.jpaa.2023.107344},
       URL = {https://doi.org/10.1016/j.jpaa.2023.107344},
}

@misc{CMQSV:24,
      title={On the Tambara Affine Line}, 
      author={David Chan and David Mehrle and J. D. Quigley and Ben Spitz and Danika Van Niel},
      year={2024},
      eprint={2410.23052},
      archivePrefix={arXiv},
      primaryClass={math.AT},
      url={https://arxiv.org/abs/2410.23052}, 
}

@article {Nak:fractions,
    AUTHOR = {Nakaoka, Hiroyuki},
     TITLE = {On the fractions of semi-{M}ackey and {T}ambara functors},
   JOURNAL = {J. Algebra},
  FJOURNAL = {Journal of Algebra},
    VOLUME = {352},
      YEAR = {2012},
     PAGES = {79--103},
      ISSN = {0021-8693,1090-266X},
   MRCLASS = {19A22 (20M99)},
  MRNUMBER = {2862176},
MRREVIEWER = {Robert\ Hartmann},
       DOI = {10.1016/j.jalgebra.2011.11.013},
       URL = {https://doi-org.proxy2.cl.msu.edu/10.1016/j.jalgebra.2011.11.013},
}

@article {AngeltveitBohmann,
    AUTHOR = {Angeltveit, Vigleik and Bohmann, Anna Marie},
     TITLE = {Graded {T}ambara functors},
   JOURNAL = {J. Pure Appl. Algebra},
  FJOURNAL = {Journal of Pure and Applied Algebra},
    VOLUME = {222},
      YEAR = {2018},
    NUMBER = {12},
     PAGES = {4126--4150},
      ISSN = {0022-4049,1873-1376},
   MRCLASS = {55P91 (55N91 55P43 55Q91)},
  MRNUMBER = {3818296},
MRREVIEWER = {Gerd\ Laures},
       DOI = {10.1016/j.jpaa.2018.02.023},
       URL = {https://doi-org.proxy2.cl.msu.edu/10.1016/j.jpaa.2018.02.023},
}

@article {Brun,
    AUTHOR = {Brun, Morten},
     TITLE = {Witt vectors and {T}ambara functors},
   JOURNAL = {Adv. Math.},
  FJOURNAL = {Advances in Mathematics},
    VOLUME = {193},
      YEAR = {2005},
    NUMBER = {2},
     PAGES = {233--256},
      ISSN = {0001-8708,1090-2082},
   MRCLASS = {13K05},
  MRNUMBER = {2136887},
MRREVIEWER = {Barry\ H.\ Dayton},
       DOI = {10.1016/j.aim.2004.05.002},
       URL = {https://doi-org.proxy2.cl.msu.edu/10.1016/j.aim.2004.05.002},
}

@article {Nishida,
    AUTHOR = {Nishida, Goro},
     TITLE = {The nilpotency of elements of the stable homotopy groups of
              spheres},
   JOURNAL = {J. Math. Soc. Japan},
  FJOURNAL = {Journal of the Mathematical Society of Japan},
    VOLUME = {25},
      YEAR = {1973},
     PAGES = {707--732},
      ISSN = {0025-5645,1881-1167},
   MRCLASS = {55E45},
  MRNUMBER = {341485},
MRREVIEWER = {J.\ F.\ Adams},
       DOI = {10.2969/jmsj/02540707},
       URL = {https://doi-org.proxy2.cl.msu.edu/10.2969/jmsj/02540707},
}

@misc{CCMQSV,
      title={The spectrum of the Burnside Tambara functor}, 
      author={Maxine Elena Calle and David Chan and David Mehrle and J. D. Quigley and Ben Spitz and Danika Van Niel},
      year={2025},
        note = {To appear in \textit{Int. Math. Res. Notices.}}
}

@article {LRZ,
    AUTHOR = {Lindenstrauss, Ayelet and Richter, Birgit and Zou, Foling},
     TITLE = {Loday constructions of {T}ambara functors},
   JOURNAL = {J. Algebra},
  FJOURNAL = {Journal of Algebra},
    VOLUME = {683},
      YEAR = {2025},
     PAGES = {278--306},
      ISSN = {0021-8693,1090-266X},
   MRCLASS = {55P91 (13D03)},
  MRNUMBER = {4929908},
       DOI = {10.1016/j.jalgebra.2025.06.016},
       URL = {https://doi-org.proxy2.cl.msu.edu/10.1016/j.jalgebra.2025.06.016},
}

@article {HillMehrleQuigley,
    AUTHOR = {Hill, Michael A. and Mehrle, David and Quigley, James D.},
     TITLE = {Free incomplete {T}ambara functors are almost never flat},
   JOURNAL = {Int. Math. Res. Not. IMRN},
  FJOURNAL = {International Mathematics Research Notices. IMRN},
      YEAR = {2023},
    NUMBER = {5},
     PAGES = {4225--4291},
      ISSN = {1073-7928,1687-0247},
   MRCLASS = {55N25 (16S10 18Gxx)},
  MRNUMBER = {4565666},
MRREVIEWER = {David\ Barnes},
       DOI = {10.1093/imrn/rnab361},
       URL = {https://doi-org.proxy2.cl.msu.edu/10.1093/imrn/rnab361},
}

@article {Hill:Localization,
    AUTHOR = {Hill, Michael A.},
     TITLE = {Equivariant chromatic localizations and commutativity},
   JOURNAL = {J. Homotopy Relat. Struct.},
  FJOURNAL = {Journal of Homotopy and Related Structures},
    VOLUME = {14},
      YEAR = {2019},
    NUMBER = {3},
     PAGES = {647--662},
      ISSN = {2193-8407,1512-2891},
   MRCLASS = {55P42 (55P60 55P91)},
  MRNUMBER = {3987553},
MRREVIEWER = {Samik\ Basu},
       DOI = {10.1007/s40062-018-0226-2},
       URL = {https://doi.org/10.1007/s40062-018-0226-2},
}

@incollection {HillHopkins:closure,
    AUTHOR = {Hill, M. A. and Hopkins, M. J.},
     TITLE = {Equivariant multiplicative closure},
 BOOKTITLE = {Algebraic topology: applications and new directions},
    SERIES = {Contemp. Math.},
    VOLUME = {620},
     PAGES = {183--199},
 PUBLISHER = {Amer. Math. Soc., Providence, RI},
      YEAR = {2014},
      ISBN = {978-0-8218-9474-3},
   MRCLASS = {55P43 (55P60 55P91)},
  MRNUMBER = {3290092},
MRREVIEWER = {Steven\ R.\ Costenoble},
       DOI = {10.1090/conm/620/12372},
       URL = {https://doi-org.proxy2.cl.msu.edu/10.1090/conm/620/12372},
}
\end{document}